
\documentclass[twoside,a4paper,leqno,12pt]{amsproc}
\usepackage[top=30mm,right=30mm,bottom=30mm,left=30mm]{geometry}
\usepackage{mathtools} 

\usepackage[pagebackref]{hyperref}
\hypersetup{citecolor=blue, linkcolor=blue, colorlinks=true}
\usepackage{amsmath, amssymb, amsthm, stmaryrd, amsfonts, xcolor, amsrefs}
\usepackage{graphicx, float, array, multicol, rotating, tikz, booktabs}
\usetikzlibrary{arrows}
\usetikzlibrary{shapes}
\usepackage{tablefootnote}      
\def\@fnsymbol#1{\ensuremath{\ifcase#1\or *\or \dagger\or \ddagger\or
   \mathsection\or \mathparagraph\or \|\or **\or \dagger\dagger
   \or \ddagger\ddagger \else\@ctrerr\fi}}


\newcommand{\gQ}{\mathrel{\raise-0.5pt\hbox{?}\kern-0.5pt{>}}}
\newcommand{\lQ}{\mathrel{{<}\kern-0.5pt\raise-0.5pt\hbox{?}}}
\newcommand{\mycirc}[1]{%
  \tikz[baseline=(char.base)]\node[draw,circle, inner sep=1.5pt, minimum size=4mm,
    text height=2.5mm](char){\ensuremath{#1}} ;}
\newcommand{\mymk}[1]{%
  \tikz[baseline=(char.base)]\node[anchor=south west, draw,rectangle, rounded corners, inner sep=2pt, minimum size=4mm,
    text height=2.5mm](char){\ensuremath{#1}} ;}

\renewcommand{\le}{\leqslant}
\renewcommand{\ge}{\geqslant}

\renewcommand{\geq}{\geqslant}
\newcommand{\coloneq}{\vcentcolon=}      


\usepackage[shortlabels]{enumitem}
\setlist[enumerate]{label=\rm{(\alph*)}}

\theoremstyle{definition}
\newtheorem{definition}{Definition}
\newtheorem{remark}[definition]{Remark}

\theoremstyle{plain}
\newtheorem{theorem}[definition]{Theorem}

\newtheorem{lemma}[definition]{Lemma}



\begin{document}

\author{S.\,P. Glasby}
\author{G.\,R. Paseman}
\address{\phantom{|}\kern-1cm S.\,P. Glasby, Centre for the Mathematics of Symmetry and Computation, University of Western Australia, 35 Stirling Highway, Godroo, Perth 6009, Australia.\newline \emph{E-mail address:} {\tt\texttt{Stephen.Glasby@uwa.edu.au}}\vskip3mm\noindent
  G.\,R. Paseman. Sheperd Systems, UC Berkeley, USA.\newline \emph{E-mail address:} {\tt\texttt{sheperdsystems@gmail.com}}
}

\thanks{Acknowledgements: SPG gratefully acknowledges support from the Australian Research Council (ARC) Discovery Project DP190100450. GRP thanks family.\newline
  Keywords: Maximum, sum, binomial coefficients.\newline
  2010 Math Subject Classification: 05A10, 11B65, 94B65.\hfill
 Date: \today.
}


\title[On the maximum of the weighted binomial
        sum \texorpdfstring{$2^{-r}\sum_{i=0}^r\binom{m}{i}$}{}]
      {On the maximum of the weighted\\ binomial
        sum \texorpdfstring{$2^{-r}\sum_{i=0}^r\binom{m}{i}$}{}}
      
\date{\today}

\begin{abstract}
  The weighted binomial sum $f_m(r)=2^{-r}\sum_{i=0}^r\binom{m}{i}$ arises
  in coding theory and information theory. We prove that,
  for $m\not\in\{0,3,6,9,12\}$, 
  the maximum value of $f_m(r)$ with $0\le r\le m$ occurs
  when $r=\lfloor m/3\rfloor+1$. We also show this maximum value
  is asymptotic to $\frac{3}{\sqrt{{\pi}m}}\left(\frac{3}{2}\right)^m$
  as $m\to\infty$.
\end{abstract}

\maketitle

\section{Introduction}\label{S:intro}

  Let $m$ be a non-negative integer, and let $f_m(r)$ be the function:
  \[
    f_m(r)=\frac{1}{2^r}\sum_{i=0}^r\binom{m}{i}.
  \]
  This function arises in coding theory and information
  theory e.g.~\cite{Ash}*{Theorem 4.5.3}.
  It is desirable for a linear code to have large rate
  (to communicate a lot of information) and large
  minimal distance (to correct many errors). So for
  a linear code with parameters $[n,k,d]$, one wants
  both $k/n$ and $d/n$ to be large.
  The case that $kd/n$ is large is studied in~\cite{AGP}.
  A Reed-Muller code $\textup{RM}(r,m)$ has $n=2^m$,
  $k=\sum_{i=0}^r\binom{m}{i}$ and
  $d=2^{m-r}$ by~\cite{LX}*{\S6.2}, and hence $kd/n$ equals $f_m(r)$.
  It is natural to ask which value of $r$ maximizes $f_m(r)$, and what is
  the size of the maximum value.
  
\begin{theorem}\label{T}
  Suppose that $m,r$ are integers where $0\le r\le m$.
  The maximum value of $f_m(r)=2^{-r}\sum_{i=0}^r\binom{m}{i}$
  occurs when $r=\lfloor\frac{m}{3}\rfloor+1$ provided $m\not\in\{0,3,6,9,12\}$.
\end{theorem}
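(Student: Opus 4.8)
The plan is to reduce everything to the sign pattern of the auxiliary sequence $D(r):=\binom{m}{r+1}-S_m(r)$, where $S_m(r):=\sum_{i=0}^{r}\binom{m}{i}$. A one‑line computation gives
\[
  f_m(r+1)-f_m(r)=2^{-(r+1)}\bigl(\binom{m}{r+1}-S_m(r)\bigr)=2^{-(r+1)}D(r),
\]
so the increments of $f_m$ change sign exactly where $D$ does. Next, $D(r+1)-D(r)=\binom{m}{r+2}-2\binom{m}{r+1}$, which is positive precisely when $\frac{m-r-1}{r+2}>2$, i.e.\ when $3r<m-5$; hence $D$ is unimodal, first increasing and then decreasing. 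Since $D(0)=m-1>0$ and $D(m-1)=2-2^m<0$ for $m\ge2$, the sequence $D$ is positive on an initial segment $\{0,\dots,r_0\}$ and nonpositive afterwards, so $f_m$ is unimodal with maximum at $r=r_0+1$. Thus it suffices to prove
\[
  D(\lfloor m/3\rfloor)>0\qquad\text{and}\qquad D(\lfloor m/3\rfloor+1)<0
\]
for all but finitely many $m$, the remaining small values (which will turn out to be exactly $m\in\{0,3,6,9,12\}$, all failing the first inequality) being dispatched by direct computation.

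Write $k=\lfloor m/3\rfloor$ and $s=m-3k\in\{0,1,2\}$, so $\binom{m}{k+1}/\binom{m}{k}=\frac{2k+s}{k+1}$ and $\binom{m}{k+2}/\binom{m}{k+1}=\frac{2k+s-1}{k+2}$. Dividing the two displayed inequalities by $\binom{m}{k}$ turns them into
\[
  R<\frac{2k+s}{k+1}\qquad\text{and}\qquad R>\frac{(2k+s)(k+s-3)}{(k+1)(k+2)},
\]
where $R:=\binom{m}{k}^{-1}S_m(k)=\sum_{j\ge0}t_j$ with $t_0=1$ and $t_j=\binom{m}{k-j}/\binom{m}{k}=\prod_{l=1}^{j}\frac{k-l+1}{2k+s+l}$. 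Both right‑hand sides approach $2$ from below like $2-\Theta(1/m)$, so the heart of the matter is a two‑sided estimate of the partial binomial sum $R$, sharp to within $O(1/m)$ with explicit constants.

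For the upper bound I would use that $t_j/t_{j-1}=\frac{k-j+1}{2k+s+j}$ decreases in $j$, so $t_j\le t_1(t_2/t_1)^{j-1}$ for $j\ge1$; summing the geometric tail gives
\[
  R\le 1+\frac{t_1}{1-t_2/t_1}=1+\frac{k(2k+s+2)}{(2k+s+1)(k+s+3)},
\]
and an elementary inequality shows this is $<\frac{2k+s}{k+1}$ for every $k\ge1$ when $s\in\{1,2\}$ and for every $k\ge7$ (that is, $m\ge21$) when $s=0$. For the lower bound I would write $\frac{k-l+1}{2k+s+l}=\frac12\bigl(1-\frac{3l+s-2}{2k+s+l}\bigr)$, apply $\prod_{l=1}^{j}(1-x_l)\ge1-\sum_{l=1}^{j}x_l$ together with $\frac{3l+s-2}{2k+s+l}\le\frac{3l+s-2}{2k+s+1}$ to obtain $t_j\ge 2^{-j}\bigl(1-\frac{c_j}{2k+s+1}\bigr)$ with $c_j:=\sum_{l=1}^{j}(3l+s-2)\ge0$, and sum this bound over $0\le j\le k$; since $\sum_{j\ge0}c_j2^{-j}=8+2s$ this yields
\[
  R\ge 2-2^{-k}-\frac{8+2s}{2k+s+1},
\]
which exceeds $\frac{(2k+s)(k+s-3)}{(k+1)(k+2)}$ once $k$ is past an explicit small bound, since the target's deficit below $2$ is $\sim\frac{12-2s}{k}$ while the lower bound's is $\sim\frac{8+2s}{2k}$ and $2(12-2s)>8+2s$ for $s\in\{0,1,2\}$.

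Combining these estimates yields $D(\lfloor m/3\rfloor)>0>D(\lfloor m/3\rfloor+1)$ for all $m$ beyond an explicit threshold (in the range $m\le20$ or so), so the maximum of $f_m$ occurs at $r=\lfloor m/3\rfloor+1$; the finitely many remaining $m$ are checked by hand, producing exactly the exceptions $m\in\{0,3,6,9,12\}$. I expect the main obstacle to be precisely the sharpness demanded in the previous paragraph: the maximizing index sits essentially at the boundary of the interval on which $f_m$ is still increasing, so a crude single‑term geometric estimate of $R$ does not suffice, and one must control the $1/m$‑coefficient of $R$ with the correct constant — the residue class $m\equiv0\pmod3$ being the delicate case, which is exactly why the exceptions are small multiples of $3$.
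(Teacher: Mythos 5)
Your reduction to the two sign conditions $D(\lfloor m/3\rfloor)>0$ and $D(\lfloor m/3\rfloor+1)<0$, the unimodality of $D$, and the geometric upper bound for $R$ are sound; that half of your argument establishes $f_m(r_0-1)<f_m(r_0)$ for all large $m$ and is essentially the paper's proof of Theorem~\ref{T:lower} (which likewise keeps a few exact factors and bounds the tail by a geometric series). The genuine gap is in your lower bound for $R$, i.e.\ in the decreasing direction $f_m(r_0)>f_m(r_0+1)$, and it occurs in the residue class $s=2$ (that is, $m=3r_0-1$), not $s=0$ as you predict. First, an arithmetic slip: $2(k+1)(k+2)-(2k+s)(k+s-3)=(12-3s)k+(4+3s-s^2)$, so the target's deficit below $2$ is $\sim\frac{12-3s}{k}$, not $\frac{12-2s}{k}$; your leading-order comparison then becomes $2(12-3s)\ge 8+2s$, which is an \emph{equality} at $s=2$, leaving no slack. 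Second, and fatally, your bound $R\ge 2-2^{-k}-\frac{8+2s}{2k+s+1}$ never beats the target when $s=2$: the target equals $\frac{(2k+2)(k-1)}{(k+1)(k+2)}=2-\frac{6}{k+2}=2-\frac{12}{2k+4}$, while your bound is at most $2-\frac{12}{2k+3}<2-\frac{12}{2k+4}$ for \emph{every} $k$. So the required inequality fails for all $k$ in this class, not just finitely many small ones, and Theorem~\ref{T} remains unproved for all $m\equiv 2\pmod 3$. The underlying reason is that for $s=2$ both sides equal $2-\frac{6}{k}+O(1/k^2)$ and the true margin is only of order $1/k^2$, so the Weierstrass step $\prod_l(1-x_l)\ge 1-\sum_l x_l$ and the replacement of $2k+s+l$ by $2k+s+1$ each concede second-order errors of exactly the size that decides the inequality.

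This is precisely why the paper treats the decreasing direction as the hard one: it reduces to the single tightest case $m=3r_0-1$ via Lemma~\ref{L:reduce}(d) and then proves $\sum_{i=r-j}^{r}\binom{3r-1}{i}>\binom{3r-1}{r+1}$ by an induction on $j$ (Lemmas~\ref{L3} and~\ref{L2}) with exact rational functions, in which the truncation level $j$ must grow with $r$ (the $j$-term bound is only valid for $r\le\binom{j+2}{2}$), i.e.\ with precision increasing as $r$ grows. To repair your proposal you would need a lower bound for $R$ that is correct to order $1/k^2$ when $s=2$ (for instance by retaining the exact first several terms $t_j$ and only then estimating the tail, with the number of exact terms growing with $k$) --- and that is exactly where the real work of the paper lies.
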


We give an optimal asymptotic bound for the maximum value of~$f_m(r)$.

\begin{theorem}\label{T:bounds}
  Suppose that $m\not\in\{0,1,3,6,9,12\}$ and
  $r_0=\lfloor \frac{m}{3}\rfloor+1$.  Then
  \begin{equation}\label{E:A1}
    \frac{1}{2^{\lfloor \frac{m}{3}\rfloor}}
    \left(1-\frac{k+2}{2(r_0+1)}\right)\binom{m}{r_0}
    < f_m(r_0) < \frac{1}{2^{\lfloor \frac{m}{3}\rfloor}}\binom{m}{r_0}
  \end{equation}
  where $k\coloneq 3r_0-m\in\{1,2,3\}$. Furthermore,  
  \begin{equation}\label{E:A2}
    f_m(r_0)<\frac{3}{\sqrt{\pi m}}\left(\frac{3}{2}\right)^{m}
    \qquad\textup{and}\qquad
    \lim_{m\to\infty} f_m(r_0)\sqrt{m}\left(\frac{2}{3}\right)^{m}
      =\frac{3}{\sqrt{\pi}}.
  \end{equation}
\end{theorem}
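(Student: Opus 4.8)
Put $q=\lfloor m/3\rfloor$, so $r_0=q+1$; writing $m=3q+s$ with $s\in\{0,1,2\}$ gives $k=3r_0-m=3-s$. With $G:=\sum_{i=0}^{q}\binom mi$ one has
\[
  f_m(r_0)=\frac{1}{2^{q+1}}\sum_{i=0}^{q+1}\binom mi=\frac{1}{2^{q}}\cdot\frac{\binom{m}{r_0}+G}{2},
\]
and, since $\binom m{q+2}-\binom m{q+1}=\tfrac{q+s-3}{q+2}\binom m{q+1}=\bigl(1-\tfrac{k+2}{q+2}\bigr)\binom m{r_0}$, the two inequalities of \eqref{E:A1} are, after cancelling, exactly $\binom m{r_0}>\sum_{i=0}^{r_0-1}\binom mi$ and $\sum_{i=0}^{r_0}\binom mi>\binom m{r_0+1}$, i.e.\ $f_m(r_0)>f_m(r_0-1)$ and $f_m(r_0)>f_m(r_0+1)$. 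So \eqref{E:A1} merely says the maximum in Theorem~\ref{T} strictly exceeds its two neighbours, which for $m\notin\{0,1,3,6,9,12\}$ is given by Theorem~\ref{T} and its proof; the value $m=1$ is omitted here precisely because $f_1(0)=f_1(1)$, so the first inequality degenerates. (If one wishes to avoid citing the proof, both inequalities also follow from $\binom m{i-1}/\binom mi=i/(m-i+1)$, which increases in $i$ and is $\le\tfrac12$ for $i\le q$, via a geometric-series bound --- immediate except in the asymptotically tight cases $s=0$ for the upper and $s=2$ for the lower inequality, where one peels a further term or two and checks $m\in\{15,18\}$ by hand.)

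For \eqref{E:A2} I would substitute \eqref{E:A1} into Stirling's formula applied to $\binom m{r_0}=\binom m{(m+k)/3}$. Write $p=r_0/m=\tfrac13+\tfrac k{3m}$ and $H(p)=-p\log p-(1-p)\log(1-p)$, so $\tfrac{m^m}{r_0^{r_0}(m-r_0)^{m-r_0}}=e^{mH(p)}$. Stirling with $n!=\sqrt{2\pi n}\,n^ne^{-n}e^{\theta_n}$, $\tfrac1{12n+1}<\theta_n<\tfrac1{12n}$, gives $\binom m{r_0}=\frac{e^{mH(p)}}{\sqrt{2\pi r_0(m-r_0)/m}}\,e^{E}$ with $E=\theta_m-\theta_{r_0}-\theta_{m-r_0}$. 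Crucially $E<0$: from $r_0<m$ and $m-r_0<m$ we get $\theta_{r_0},\theta_{m-r_0}>\tfrac1{12m}$, whence $\theta_{r_0}+\theta_{m-r_0}>\tfrac1{6m}>\theta_m$. Moreover $r_0(m-r_0)=\tfrac{(m+k)(2m-k)}9>\tfrac{2m^2}9$ (as $m>k$), so $\sqrt{2\pi r_0(m-r_0)/m}>\tfrac{2\sqrt{\pi m}}3$; and concavity of $H$, with $H(\tfrac13)=\log3-\tfrac23\log2$ and $H'(\tfrac13)=\log2$, yields $e^{mH(p)}<(3\cdot2^{-2/3})^m2^{k/3}$. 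Combining, $\binom m{r_0}<\tfrac3{2\sqrt{\pi m}}(3\cdot2^{-2/3})^m2^{k/3}$; since $q=\tfrac{m+k-3}3$ the factor $2^{k/3}$ cancels in $2^{-q}\binom m{r_0}$, giving $2^{-q}\binom m{r_0}<\tfrac3{\sqrt{\pi m}}(3/2)^m$, and $f_m(r_0)<2^{-q}\binom m{r_0}$ from \eqref{E:A1} then proves the first part of \eqref{E:A2}.

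For the limit I would add matching lower estimates: a Taylor bound $H(p)\ge H(\tfrac13)+(p-\tfrac13)\log2-\tfrac9{4m^2}$, the expansion $\sqrt{2\pi r_0(m-r_0)/m}=\tfrac{2\sqrt{\pi m}}3\bigl(1+O(m^{-1})\bigr)$, and $e^E\ge e^{-O(m^{-1})}$, giving $2^{-q}\binom m{r_0}=\tfrac3{\sqrt{\pi m}}(3/2)^m\bigl(1+O(m^{-1})\bigr)$. Since $\tfrac{k+2}{q+2}\to0$, \eqref{E:A1} forces $G/\binom m{r_0}\to1$, so $f_m(r_0)=\tfrac12\bigl(1+G/\binom m{r_0}\bigr)2^{-q}\binom m{r_0}=(1+o(1))\tfrac3{\sqrt{\pi m}}(3/2)^m$, and $f_m(r_0)\sqrt m\,(2/3)^m\to 3/\sqrt\pi$.

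The step I expect to be hardest is the lower bound in \eqref{E:A1}, equivalently $f_m(r_0)>f_m(r_0+1)$: whether taken from Theorem~\ref{T} or re-proved, it is the inequality that is asymptotically tight --- for $m\equiv2\pmod3$ its two sides agree to order $m^{-1}$ --- so the elementary estimates behind it must be done with care. The one analytic subtlety is noticing that the sub-leading Stirling correction $e^{E}$ for $\binom m{r_0}$ has the favourable sign $E<0$, which is what makes the \emph{strict} upper bound in \eqref{E:A2} come out without extra work.
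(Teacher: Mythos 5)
Your proposal is correct, and for \eqref{E:A1} it is essentially the paper's argument: both reduce the two inequalities, via the identity $f_m(r_0)=2^{-r_0}\bigl(\binom{m}{r_0}+\sum_{i=0}^{r_0-1}\binom{m}{i}\bigr)$ and the ratio $\binom{m}{r_0+1}=(2-\tfrac{k+2}{r_0+1})\binom{m}{r_0}$, to the strict neighbour comparisons $f_m(r_0-1)<f_m(r_0)$ and $f_m(r_0)>f_m(r_0+1)$, which are exactly Theorems~\ref{T:lower} and~\ref{T:upper} combined with Lemma~\ref{L:reduce}(a,c). For \eqref{E:A2} both you and the paper use Stirling, but the organization differs: the paper first trades $\binom{m}{r_0}$ for $\binom{3r_0}{r_0}$ (and, for the lower estimate, $\binom{m}{r_0+1}$ for $\binom{3r_0-3}{r_0-1}$) so that it can quote ready-made bounds at the exact ratio $p=\tfrac13$ from Ash, whereas you apply Robbins' form of Stirling directly to $\binom{m}{(m+k)/3}$, controlling the off-centre ratio with the entropy function and concavity, and you obtain the limit by sandwiching $f_m(r_0)$ between $(1-O(1/m))\,2^{-\lfloor m/3\rfloor}\binom{m}{r_0}$ and $2^{-\lfloor m/3\rfloor}\binom{m}{r_0}$ using \eqref{E:A1} itself, rather than running the paper's separate lower-bound chain through $\binom{m}{r_0+1}\approx4\binom{m}{r_0-1}\ge 4(\tfrac32)^{3-k}\binom{3r_0-3}{r_0-1}$. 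Your route is self-contained (no citation for the binomial bounds, and the observation $E<0$ gives strictness cleanly) and arguably tidier for the limit; the paper's route avoids any Taylor/concavity estimates of $H$ by only ever invoking bounds at $p=\tfrac13$ exactly. One caution: your parenthetical suggestion that the lower inequality of \eqref{E:A1} could be re-proved by a geometric-series bound with ``a further term or two'' peeled off is too optimistic --- both sides of $T_j>X_0^{-1}$ tend to $2$ with discrepancies of order $1/r$, so any fixed number of peeled terms leaves an error $2^{-j}$ that eventually dominates; this is precisely why the paper needs the growing-precision induction of Lemma~\ref{L2}. Since your primary route cites Theorem~\ref{T:upper} for this step (as the paper does), this is an aside rather than a gap.
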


We prove that $f_m(r)$ increases strictly if
$0\le r\le r_0\coloneq\lfloor\frac{m}{3}\rfloor+1$ and $m>12$
(see Theorem~\ref{T:lower}), and it 
decreases strictly for $r_0\le r\le m$ (see Theorem~\ref{T:upper}).
Elementary arguments in Lemma~\ref{L:2}(c) show that
$f_m(0)<f_m(1)<\cdots<f_m(r_0-1)$. More work is required to prove that
$f_m(r_0-1)<f_m(r_0)$. Determining when $f_m(r)$ decreases involves
a delicate inductive proof requiring a growing amount of precision, and
inequalities with rational functions such as
$X_i=\frac{r-i+1}{m-r+i}$, see Lemma~\ref{L:strat}.
In Section~\ref{S:Bound} we establish
bounds (and asymptotic behavior) for $f_m(r_0)$ using standard methods.

Brendan McKay~\cite{Gmo} showed, using approximations
for sufficiently large $m$, that the maximum value of $f_m(r)$ is near $m/3$.
His method may well extend to a proof of Theorem~\ref{T}. If so,
it would involve very different techniques from ours.

\section{Data, comparisons and strategies}\label{S:red}

  The values of
  $f_m(0),f_m(1),f_m(2),\dots,f_m(m-2),f_m(m-1),f_m(m)$ appear to
  increase to a maximum and then decrease. For `large' $m$ we see that
  \[ 1<\frac{m{+}1}{2}<\frac{m^2{+}m{+}2}{8}<\cdots \;?\;\cdots
  >8-\frac{m^2{+}m{+}2}{2^{m-2}}>4-\frac{m+1}{2^{m-2}}>2-\frac{1}{2^{m-1}}>1.
  \]
  Computer calculations for `large' $m$ suggest that
  a maximum value for $f_m(r)$ occurs  at $r_0=\lfloor\frac{m}{3}\rfloor+1$, see
  Table~\ref{T:data} which lists the \emph{integer part}
  $\lfloor f_m(r)\rfloor$.  Computing $f_m(r)$ exactly shows that
  for $m\in\{0,3,6,9,12\}$ the maximum occurs at $r_0-1$
  and not~$r_0$, see Table~\ref{T:r0}. The maximum happens to occur for a
  unique $r$, except for $m=1$.

\begin{table}[!ht]
  \caption{Maximum values of $\lfloor f_m(r)\rfloor$ for $0\le r\le m$
    and $m\in\{6,7,\dots,15\}$.}\label{T:data}
  \begin{tabular}{rccccccccccccccccc}\toprule
6&& 1&3&\mycirc{5}&\mymk{5}&3&1&1 &&&&&&&\\
7&& 1&4&7&\mymk{8}&6&3&1&1 &&&&&&&\\
8&& 1&4&9&\mymk{11}&10&6&3&1&1 &&&&&&\\
9&& 1&5&11&\mycirc{16}&\mymk{16}&11&7&3&1&1 &&&&&\\
10&& 1&5&14&22&\mymk{24}&19&13&7&3&1&1 &&&&\\
11&& 1&6&16&29&\mymk{35}&32&23&14&7&3&1&1 &&&\\
12&& 1&6&19&37&\mycirc{49}&\mymk{49}&39&25&14&7&3&1&1 &&\\
13&& 1&7&23&47&68&\mymk{74}&64&45&27&15&7&3&1&1 &\\
14&& 1&7&26&58&91&\mymk{108}&101&77&50&29&15&7&3&1&1 \\
15&& 1&8&30&72&121&154&\mymk{155}&128&89&54&30&15&7&3&1&1 \\ \bottomrule
\end{tabular}
\end{table}

  Determining the relative sizes of $f_m(r)$ and $f_m(r+1)$ is reduced in 
  Lemma~\ref{L:reduce} to determining the relative sizes of 
  $\sum_{i=0}^{r}\binom{m}{i}$ and $\binom{m}{r+1}$. 

  \begin{lemma}\label{L:reduce}
    Suppose that $0\le r< m$. Then
    \begin{enumerate}[{\rm (a)}]
    \item the inequality $f_m(r)<f_m(r+1)$ is equivalent to
      $\sum_{i=0}^{r}\binom{m}{i}<\binom{m}{r+1}$,
    \item if $\sum_{i=0}^{r}\binom{m}{i}\le\binom{m}{r+1}$, then
      $\sum_{i=0}^{r}\binom{m+1}{i}<\binom{m+1}{r+1}$,
    \item the inequality $f_m(r)>f_m(r+1)$ is equivalent to
      $\sum_{i=0}^{r}\binom{m}{i}>\binom{m}{r+1}$, and
    \item if $\sum_{i=0}^{r}\binom{m}{i}\ge\binom{m}{r+1}$, then
      $\sum_{i=0}^{r}\binom{m-1}{i}>\binom{m-1}{r+1}$.
    \end{enumerate}
  \end{lemma}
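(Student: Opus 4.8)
The plan is to establish (a) by a direct algebraic manipulation, then derive (c) as its mirror image, and finally obtain the two monotonicity-in-$m$ statements (b) and (d) by comparing the Pascal-triangle recursions for consecutive values of $m$. For part (a), I would start from the definition $f_m(r)=2^{-r}\sum_{i=0}^r\binom{m}{i}$ and write out the inequality $f_m(r)<f_m(r+1)$ as
\[
  \frac{1}{2^r}\sum_{i=0}^r\binom{m}{i}
    <\frac{1}{2^{r+1}}\sum_{i=0}^{r+1}\binom{m}{i}.
\]
Multiplying through by $2^{r+1}$ gives $2\sum_{i=0}^r\binom{m}{i}<\sum_{i=0}^r\binom{m}{i}+\binom{m}{r+1}$, and subtracting $\sum_{i=0}^r\binom{m}{i}$ from both sides yields exactly $\sum_{i=0}^r\binom{m}{i}<\binom{m}{r+1}$. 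This chain of steps is reversible, so the equivalence holds. Part (c) follows by running the same computation with every ``$<$'' replaced by ``$>$'' (the manipulations are order-preserving since we only multiply by positive constants and add equal quantities), and one could equally note that $f_m(r)>f_m(r+1)$ is the negation of $f_m(r)\le f_m(r+1)$ together with the $\ne$ case, but the cleanest route is just to repeat the algebra.

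For part (b), the idea is that passing from $m$ to $m+1$ helps the right-hand side grow faster than the left. Using Pascal's rule $\binom{m+1}{i}=\binom{m}{i}+\binom{m}{i-1}$, I would compute
\[
  \sum_{i=0}^r\binom{m+1}{i}=\sum_{i=0}^r\binom{m}{i}+\sum_{i=0}^{r-1}\binom{m}{i},
  \qquad
  \binom{m+1}{r+1}=\binom{m}{r+1}+\binom{m}{r}.
\]
Under the hypothesis $\sum_{i=0}^r\binom{m}{i}\le\binom{m}{r+1}$, it suffices to show that the ``increment'' on the left, namely $\sum_{i=0}^{r-1}\binom{m}{i}$, is strictly smaller than the ``increment'' on the right, namely $\binom{m}{r}$. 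But $\sum_{i=0}^{r-1}\binom{m}{i}\le\sum_{i=0}^{r}\binom{m}{i}-\binom{m}{r}\le\binom{m}{r+1}-\binom{m}{r}$; to finish I need $\binom{m}{r+1}-\binom{m}{r}<\binom{m}{r}$, i.e.\ $\binom{m}{r+1}<2\binom{m}{r}$, which is equivalent to $\frac{m-r}{r+1}<2$, i.e.\ $m<3r+3$. One must check this actually holds in the relevant range; if it does not hold unconditionally, the honest argument is to note that when $\sum_{i=0}^r\binom{m}{i}\le\binom{m}{r+1}$ we have $f_m(r)\le f_m(r+1)$, so by the increasing-then-decreasing behavior the peak of $f_m$ is at index $>r$, forcing $r<r_0\le\lfloor m/3\rfloor+1$ and hence $m<3r+3$ — but since those monotonicity results are proved later, the self-contained proof should instead bound $\sum_{i=0}^{r-1}\binom{m}{i}$ directly. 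A clean way: add $\binom{m}{r-1}+\dots$ telescopes poorly, so instead observe $2\sum_{i=0}^{r-1}\binom{m}{i}\le\sum_{i=0}^{r-1}\binom{m}{i}+\binom{m}{r}\le\sum_{i=0}^{r}\binom{m}{i}\le\binom{m}{r+1}$ would need $\sum_{i=0}^{r-1}\binom{m}{i}\le\binom{m}{r}$, and this last inequality is where the real content sits. I expect this comparison of partial sums with a single binomial coefficient to be the main obstacle, and I would handle it by induction on $r$ using Pascal's rule, or by invoking the unimodality of $\binom{m}{i}$.

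Part (d) is the dual: from $\binom{m}{i}=\binom{m-1}{i}+\binom{m-1}{i-1}$ one gets $\sum_{i=0}^r\binom{m}{i}=\sum_{i=0}^r\binom{m-1}{i}+\sum_{i=0}^{r-1}\binom{m-1}{i}$ and $\binom{m}{r+1}=\binom{m-1}{r+1}+\binom{m-1}{r}$. The hypothesis $\sum_{i=0}^r\binom{m}{i}\ge\binom{m}{r+1}$ rewrites as
\[
  \sum_{i=0}^r\binom{m-1}{i}+\sum_{i=0}^{r-1}\binom{m-1}{i}
    \ge\binom{m-1}{r+1}+\binom{m-1}{r},
\]
and since $\sum_{i=0}^{r-1}\binom{m-1}{i}\le\sum_{i=0}^{r}\binom{m-1}{i}$ is comparable in size to $\binom{m-1}{r}$ while the right-hand term $\binom{m-1}{r+1}$ is ``large'', one deduces $\sum_{i=0}^r\binom{m-1}{i}>\binom{m-1}{r+1}$ with strict inequality coming from the fact that $\sum_{i=0}^{r-1}\binom{m-1}{i}<\binom{m-1}{r}+\binom{m-1}{r+1}$ cannot absorb the whole deficit. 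The bookkeeping here mirrors part (b), and the same partial-sum-versus-binomial estimate is the crux; once that is in hand, (b) and (d) are short.
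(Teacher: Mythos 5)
Parts (a) and (c) of your proposal are correct and coincide with the paper's argument: clear denominators, cancel the common partial sum, and note the steps are reversible. The problem is in (b) and (d). Your Pascal-rule decomposition reduces (b) to the claim $\sum_{i=0}^{r-1}\binom{m}{i}<\binom{m}{r}$, and you correctly identify this as the crux --- but you never prove it, and neither of your suggested tools can. As an unconditional statement it is simply false (take $m=6$, $r=3$: $\sum_{i=0}^{2}\binom{6}{i}=22>20=\binom{6}{3}$), so ``unimodality of $\binom{m}{i}$'' or a bare induction on $r$ cannot deliver it. It is true \emph{under the hypothesis} $\sum_{i=0}^{r}\binom{m}{i}\le\binom{m}{r+1}$, but deducing that implication is essentially the statement that the comparison ``partial sum versus next binomial coefficient'' switches sign only once as $r$ grows, i.e.\ the unimodality of $f_m$ itself (Lemma~\ref{L:2}(b)); invoking that here would make the reduction circular in spirit, or at least import machinery the lemma is meant to precede. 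Your treatment of (d) (``comparable in size \dots cannot absorb the whole deficit'') is not an argument at all, though once (b) is actually proved, (d) does follow from it by contraposition after shifting $m\mapsto m-1$ (with the trivial case $r=m-1$ handled separately).

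The paper closes (b) and (d) with a one-line device that avoids the increment comparison entirely: write each term with a ratio, $\binom{m+1}{i}=\frac{m+1}{m-i+1}\binom{m}{i}$, and bound the factor over $0\le i\le r$ by its largest value $\frac{m+1}{m-r+1}$. Then
\[
\sum_{i=0}^{r}\binom{m+1}{i}\le\frac{m+1}{m-r+1}\sum_{i=0}^{r}\binom{m}{i}
\le\frac{m+1}{m-r+1}\binom{m}{r+1}<\frac{m+1}{m-r}\binom{m}{r+1}=\binom{m+1}{r+1},
\]
where the strict middle-to-last step supplies the strictness required in the conclusion. Part (d) is the mirror image using $\binom{m-1}{i}=\frac{m-i}{m}\binom{m}{i}\ge\frac{m-r}{m}\binom{m}{i}$ and $\frac{m-r-1}{m}\binom{m}{r+1}=\binom{m-1}{r+1}$. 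You should either adopt this ratio argument or, if you keep the Pascal decomposition, prove the conditional inequality $\sum_{i=0}^{r-1}\binom{m}{i}<\binom{m}{r}$ honestly from the hypothesis --- which, as it stands, is the missing content of your proof.
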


  \begin{proof}
    (a,b)~Clearly $f_m(r)<f_m(r+1)$ is equivalent to
    $2\sum_{i=0}^{r}\binom{m}{i}<\sum_{i=0}^{r+1}\binom{m}{i}$ which is
    equivalent to  $\sum_{i=0}^{r}\binom{m}{i}<\binom{m}{r+1}$.
    If $r<m$ and $\sum_{i=0}^{r}\binom{m}{i}\le\binom{m}{r+1}$,~then
    \begin{align*}
    \sum_{i=0}^{r}\binom{m+1}{i}
    &=\sum_{i=0}^{r}\frac{m+1}{m-i+1}\binom{m}{i}
    \le\frac{m+1}{m-r+1}\sum_{i=0}^{r}\binom{m}{i}\\
    &<\frac{m+1}{m-r}\binom{m}{r+1}
    =\binom{m+1}{r+1}.
    \end{align*}

    (c,d)~Clearly $f_m(r)>f_m(r+1)$ is equivalent to
    $2\sum_{i=0}^{r}\binom{m}{i}>\sum_{i=0}^{r+1}\binom{m}{i}$ which, in turn, is
    equivalent to  $\sum_{i=0}^{r}\binom{m}{i}>\binom{m}{r+1}$.
    If $\sum_{i=0}^{r}\binom{m}{i}\ge\binom{m}{r+1}$, then as $m>r\ge0$,
    \begin{align*}
    \sum_{i=0}^{r}\binom{m-1}{i}
    &=\sum_{i=0}^{r}\frac{m-i}{m}\binom{m}{i}
    \ge\frac{m-r}{m}\sum_{i=0}^{r}\binom{m}{i}\\
    &>\frac{m-r-1}{m}\binom{m}{r+1}
    =\binom{m-1}{r+1}.\qedhere
    \end{align*}
\end{proof}

  The following easy lemma elucidates which $r\in\{0,\dots,m\}$
  maximize $f_m(r)$.

\begin{lemma}\label{L:2}
  Let $s_m(m+1) = 2^m$, and for $0\le r\le m$ define
  \[
  s_m(r)=\sum_{i=0}^r\binom{m}{i},\qquad t_m(r)=\frac{s_m(r+1)}{s_m(r)},
  \quad\textup{and}\quad
  c_m(r)=\frac{\binom{m}{r+1}}{\binom{m}{r}}=\frac{m-r}{r+1}.
  \]
  {\rm(a)}~If $0\le r\le m$, then $c_m(r)<t_m(r)$, and if $0\le r<m$, then
  $t_m(r+1)<t_m(r)$.\newline
  {\rm(b)}~If $m\ge2$, then  for some $r^*$,
  $f_m(0)<\cdots<f_m(r^*)$ and $f_m(r^*+1)>\cdots>f_m(m)$.\newline
  {\rm(c)}~$\max\{f_m(0),\dots,f_m(m)\}=\max\{f_m(r^*),f_m(r^*{+}1)\}$
    and $f_m(0)<\cdots<f_m(r_0{-}1)$.
\end{lemma}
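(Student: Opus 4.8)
The plan is to treat the three parts in sequence, using part (a) as the engine for (b) and (c). For part (a), the inequality $c_m(r) < t_m(r)$ is essentially immediate: writing $t_m(r) = \frac{s_m(r) + \binom{m}{r+1}}{s_m(r)} = 1 + \frac{\binom{m}{r+1}}{s_m(r)}$, I would compare this with $c_m(r) = \frac{\binom{m}{r+1}}{\binom{m}{r}}$; since $s_m(r) = \sum_{i=0}^r \binom{m}{i} > \binom{m}{r}$ (for $r \geq 1$; and for $r = 0$ one checks $c_m(0) = m < m+1 = t_m(0)$ directly), we get $\frac{\binom{m}{r+1}}{s_m(r)} < \frac{\binom{m}{r+1}}{\binom{m}{r}}$, hence $t_m(r) - 1 < c_m(r)$... wait, that gives the wrong direction, so instead I should note $c_m(r) = \frac{\binom{m}{r+1}}{\binom{m}{r}} \le \frac{\binom{m}{r+1}}{s_m(r) - \binom{m}{r+1}}$ is not what I want either; the clean route is $t_m(r) > \frac{\binom{m}{r+1} + \binom{m}{r}}{\binom{m}{r} + \binom{m}{r-1} + \cdots} $. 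The correct elementary argument: $c_m(r) < t_m(r) \iff \binom{m}{r+1} s_m(r) < s_m(r+1)\binom{m}{r} = (s_m(r)+\binom{m}{r+1})\binom{m}{r}$, i.e. $\binom{m}{r+1}(s_m(r) - \binom{m}{r}) < s_m(r)\binom{m}{r}$, which I will verify termwise using the unimodality/ratio structure of binomial coefficients. For the second claim $t_m(r+1) < t_m(r)$, I would clear denominators and reduce to a polynomial inequality in the partial sums, again exploiting that consecutive ratios $\binom{m}{i+1}/\binom{m}{i}$ are decreasing in $i$.

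For part (b), I would combine part (a) with Lemma~\ref{L:reduce}(a),(c): the sign of $f_m(r+1) - f_m(r)$ is governed by whether $s_m(r) \lessgtr \binom{m}{r+1}$, equivalently whether $t_m(r) \gtrless 2$, equivalently whether $c_m(r)$ relates to a quantity that is monotone in $r$. Since $t_m(r)$ is strictly decreasing in $r$ by part (a), there is a unique threshold index: let $r^*$ be the largest $r$ with $t_m(r) > 2$ (such $r$ exists since $t_m(0) = m+1 > 2$ for $m \geq 2$). Then $f_m$ strictly increases up to $f_m(r^* + 1)$ and strictly decreases thereafter — one must check the boundary value $t_m(m)$ using the convention $s_m(m+1) = 2^m$, giving $t_m(m) = 1 < 2$, so the decreasing run reaches all the way to $r = m$.

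For part (c), the first equality is a formal consequence of (b): on an increasing-then-decreasing sequence the maximum is attained among the two "peak" candidates $f_m(r^*)$ and $f_m(r^*+1)$. The second claim, $f_m(0) < \cdots < f_m(r_0 - 1)$ with $r_0 = \lfloor m/3 \rfloor + 1$, requires showing $r^* \geq r_0 - 2$, i.e. that $f_m$ is still increasing strictly up through index $r_0 - 1$; by Lemma~\ref{L:reduce}(a) this amounts to $s_m(r) < \binom{m}{r+1}$ for all $r \le r_0 - 2 = \lfloor m/3\rfloor - 1$. The cleanest bound: for $r \le \lfloor m/3\rfloor - 1$ we have $c_m(r) = \frac{m-r}{r+1} \geq \frac{m - (r_0-2)}{r_0 - 1} > 2$ (check: $m - \lfloor m/3\rfloor + 1 > 2\lfloor m/3\rfloor$ reduces to $m + 1 > 3\lfloor m/3\rfloor$, true), so each binomial coefficient up to $\binom{m}{r+1}$ more than doubles, which forces $\binom{m}{r+1} > \binom{m}{r} + \binom{m}{r-1} + \cdots + \binom{m}{0} = s_m(r)$ by a geometric-series comparison.

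I expect the main obstacle to be part (a)'s monotonicity claim $t_m(r+1) < t_m(r)$: unlike the first inequality it genuinely mixes two partial sums at different indices, and the termwise comparison is slightly delicate near the tail. The natural tool is the identity $t_m(r) = 1 + \binom{m}{r+1}/s_m(r)$ together with the fact that $\binom{m}{r+1}/s_m(r)$ is decreasing — which itself needs that $\binom{m}{r+2}/\binom{m}{r+1} < \binom{m}{r+1}/\binom{m}{r} < \cdots$, i.e. log-concavity of the binomials, combined with a Cauchy-type inequality for the ratio of a term to a partial sum. Everything else is bookkeeping and careful treatment of the $r = 0$ and $r = m$ boundary cases.
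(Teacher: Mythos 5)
Your plan is correct in substance, but it reaches the lemma by a different route than the paper. For part (a) the paper runs a single induction on $r$ that proves $c_m(r)<t_m(r)$ and $t_m(r+1)<t_m(r)$ simultaneously, via the mediant: from $c_m(r+1)<c_m(r)<t_m(r)$ one gets $c_m(r+1)<\frac{\binom{m}{r+2}+s_m(r+1)}{\binom{m}{r+1}+s_m(r)}<t_m(r)$, and the middle term is exactly $t_m(r+1)$. Your route instead cross-multiplies each inequality separately and compares termwise using the decreasing ratios $\binom{m}{i+1}/\binom{m}{i}=\frac{m-i}{i+1}$; this does work (for the first inequality pair $\binom{m}{r+1}\binom{m}{i}$ with $\binom{m}{r}\binom{m}{i+1}$ for $i\le r-1$ and note the unmatched positive term $\binom{m}{r}\binom{m}{0}$; for the second, the claim reduces to $\binom{m}{r+2}s_m(r)<\binom{m}{r+1}s_m(r+1)$, which succumbs to the identical pairing), so the step you flag as the main obstacle is in fact no harder than the first inequality — though as written you leave both verifications as sketches, and your vaguer appeal to a ``Cauchy-type inequality'' is really just this one pairing. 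In part (c) you also diverge from the paper: the paper deduces $f_m(0)<\cdots<f_m(r_0-1)$ from $2\le c_m(r)<t_m(r)$ and the definition of $r^*$, whereas you prove $s_m(r)<\binom{m}{r+1}$ for $r\le\lfloor m/3\rfloor-1$ directly by the geometric-series bound coming from $c_m(i)>2$; your computation $m+1>3\lfloor m/3\rfloor$ is right and this argument is self-contained, arguably more elementary. One small wrinkle in (b): with your choice of $r^*$ (largest $r$ with $t_m(r)>2$) the claim ``strictly decreases thereafter'' needs $t_m(r^*+1)<2$ strictly, which your definition only gives as $\le 2$; the paper avoids this by taking $r^*$ to be the smallest index with $t_m(r^*)\le 2$, so that $t_m(r)<t_m(r^*)\le 2$ strictly for all $r\ge r^*+1$. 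Since the lemma only asserts existence of some $r^*$, your argument is repaired by shifting the index (or adopting the paper's choice), so this is an easy fix rather than a genuine gap.
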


\begin{proof}
  (a)~We show $c_m(r)<t_m(r)$ via induction on $r$. This is true when $r=0$ as
  $c_m(0)=m<m+1=t_m(0)$.  Suppose that $0\le r<m$ and $c_m(r)<t_m(r)$ holds. That is,
  $\binom{m}{r+1}/\binom{m}{r}<s_m(r+1)/s_m(r)$ holds. Since
  $c_m(r+1)=\frac{m-r-1}{r+2}<\frac{m-r}{r+1}=c_m(r)$ we have
  $c_m(r+1)<c_m(r)<t_m(r)$. Using properties of mediants, it follows that
  \[
  c_m(r+1)=\frac{\binom{m}{r+2}}{\binom{m}{r+1}}
  <\frac{\binom{m}{r+2}+s_m(r+1)}{\binom{m}{r+1}+s_m(r)}<\frac{s_m(r+1)}{s_m(r)}=t_m(r).
  \]
  Hence $c_m(r+1)<t_m(r+1)<t_m(r)$ as $s_m(n+1)=\binom{m}{n+1}+s_m(n)$.
  This completes the induction, and it also proves that $t_m(r+1)<t_m(r)$,
  as claimed.
  
  (b)~Since $s_m(m+1) = 2^m$, part~(a) shows that
  $1=t_m(m)<\cdots<t_m(0)=m+1$. Choose an integer $r^*$ such that
  $t_m(r^*)\le 2<t_m(r^*-1)$. The following are equivalent:
  $2<t_m(r)$; $2s_m(r)<s_m(r+1)$; $f_m(r)<f_m(r+1)$.
  Thus $2<t_m(r^*-1)<\cdots<t_m(0)$ implies
  $f_m(0)<\cdots<f_m(r^*)$.
  Similarly, $t_m(m-1)<\cdots<t_m(r^*+1)<2$
  and $t_m(r)<2$ implies $f_m(r+1)<f_m(r)$.
  Hence $f_m(r^*+1)>\cdots>f_m(m)$.

  (c)~By part~(b), 
  $\max\{f_m(r)\mid 0\le r\le m\}=\max\{f_m(r^*),f_m(r^*+1)\}$.
  If $2\le c_m(r)=\frac{m-r}{r+1}$, then $3r+2\le m$ and
  $r\le\lfloor\frac{m-2}{3}\rfloor$. Hence $2\le c_m(r)<t_m(r)$
  by part~(a), and $\lfloor\frac{m-2}{3}\rfloor\le r^*-1$ by
  the definition of $r^*$. Thus $r_0-1=\lfloor\frac{m}{3}\rfloor\le r^*$ and
  it follows from part~(b) that $f_m(0)<\cdots<f_m(r_0-1)$.
%
\end{proof}


  Fix $m$ and $r$ where $0\le r< m$.
  We shall use the following notation:
    \begin{align}
      X_i&=\frac{r-i+1}{m-r+i} &\textup{for $0\le i\le r$},\label{E:X}\\
      S_j&= 1+X_{j+1}+X_{j+1}X_{j+2}+\cdots+X_{j+1}X_{j+2}\cdots X_r
          &\textup{for $0\le j< r$,} \label{E:S}\\
      T_{j}&=1+X_1+X_1X_2+\cdots+X_1X_2\cdots X_j
          &\textup{for $0\le j\le r$.} \label{E:T}
    \end{align}
    Our convention in~\eqref{E:T} is that $T_0=1$ as
    $T_j=\sum_{i=0}^j(\prod_{k=1}^i X_k)$ equals 1 when $j=0$.

\begin{lemma}\label{L:strat}
  Fix $m,r,j$ where $0\le j\le r< m$. Using the above definitions, 
\begin{enumerate}[{\rm (a)}]
\item the inequality $\sum_{i=0}^j\binom{m}{r-i}>\binom{m}{r+1}$
  is equivalent to $T_j>X_0^{-1}$,
\item the inequality $\sum_{i=0}^r\binom{m}{i}<\binom{m}{r+1}$
  is equivalent to $S_0<X_0^{-1}$.
\end{enumerate}
\end{lemma}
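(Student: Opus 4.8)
The plan is to reduce both parts to a single telescoping identity for ratios of binomial coefficients, and then read off the claimed inequalities. First I would verify that for $0\le i\le r$,
\[
  \frac{\binom{m}{r-i}}{\binom{m}{r-i+1}}=\frac{(r-i)+1}{m-(r-i)}=\frac{r-i+1}{m-r+i}=X_i,
\]
which is exactly the definition~\eqref{E:X}. Telescoping this relation down to the base term $\binom{m}{r}=X_0\binom{m}{r+1}$ then gives
\[
  \binom{m}{r-i}=X_iX_{i-1}\cdots X_1X_0\,\binom{m}{r+1}
  =\Bigl(\prod_{k=0}^{i}X_k\Bigr)\binom{m}{r+1}\qquad(0\le i\le r).
\]
Since $0\le r<m$, both $X_0=\frac{r+1}{m-r}$ and $\binom{m}{r+1}$ are strictly positive, so one may multiply or divide inequalities by them without changing direction.

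For part~(a), I would sum the identity over $0\le i\le j$ and factor out $X_0\binom{m}{r+1}$:
\[
  \sum_{i=0}^{j}\binom{m}{r-i}
  =\binom{m}{r+1}\sum_{i=0}^{j}\prod_{k=0}^{i}X_k
  =X_0\binom{m}{r+1}\sum_{i=0}^{j}\prod_{k=1}^{i}X_k
  =X_0\binom{m}{r+1}\,T_j,
\]
where the final equality is just the definition~\eqref{E:T} of $T_j$ (the $i=0$ term being the empty product $1$, consistent with the convention $T_0=1$). Dividing through by $X_0\binom{m}{r+1}>0$ shows that $\sum_{i=0}^{j}\binom{m}{r-i}>\binom{m}{r+1}$ is equivalent to $X_0T_j>1$, that is, to $T_j>X_0^{-1}$.

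For part~(b), I would note that reindexing $i\mapsto r-i$ gives $\sum_{i=0}^{r}\binom{m}{i}=\sum_{i=0}^{r}\binom{m}{r-i}$, and that comparing~\eqref{E:S} with~\eqref{E:T} yields $S_0=1+X_1+\cdots+X_1X_2\cdots X_r=T_r$. Hence part~(b) is precisely the $j=r$ instance of the computation above: $\sum_{i=0}^{r}\binom{m}{i}=X_0\binom{m}{r+1}S_0$, which is $<\binom{m}{r+1}$ exactly when $S_0<X_0^{-1}$. I do not anticipate a genuine obstacle here, since the lemma is a bookkeeping step; the part needing the most care is keeping the index conventions straight — the empty-product convention making $T_0=1$, the telescoping terminating at the base factor $X_0$ rather than $X_1$, and the reindexing $i\mapsto r-i$ in part~(b) — together with the reminder that the hypothesis $r<m$ is exactly what guarantees $\binom{m}{r+1}>0$ and $X_0>0$, legitimising the division.
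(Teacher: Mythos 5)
Your proposal is correct and follows essentially the same route as the paper: telescoping the ratio identity $\binom{m}{r-i}=X_i\binom{m}{r-i+1}$ to express the partial sums as $\binom{m}{r}T_j$ (you carry the extra factor $X_0$ to reach $\binom{m}{r+1}$ directly, which is only a cosmetic difference), then dividing by the positive quantity and using $S_0=T_r$ for part (b). No gaps.
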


\begin{proof}
For $0\le i\le r$, we have $\binom{m}{r-i}=X_i\binom{m}{r-i+1}$ so
$\binom{m}{r-i}=(\prod_{k=1}^i X_k)\binom{m}{r}$ holds.
Therefore $\sum_{i=0}^{j}\binom{m}{r-i}
  =\binom{m}{r}\sum_{i=0}^{j}\left(\prod_{k=1}^i X_k\right)
  =\binom{m}{r}T_j$.
Since $\binom{m}{r}=X_0\binom{m}{r+1}$, the inequality
$\sum_{i=0}^{j}\binom{m}{r-i}>\binom{m}{r+1}$ is equivalent to
$\binom{m}{r}T_j>X_0^{-1}\binom{m}{r}$ which is equivalent to
$T_j>X_0^{-1}$. This proves part~(a).

Note that 
$\sum_{i=0}^{r}\binom{m}{i}=\sum_{i=0}^{r}\binom{m}{r-i}
=\binom{m}{r}T_r=\binom{m}{r}S_0$ since $S_0=T_r$.
Since $\binom{m}{r+1}=X_0^{-1}\binom{m}{r}$, the inequality
$\sum_{i=0}^{r}\binom{m}{i}<\binom{m}{r+1}$ is equivalent to
$\binom{m}{r}S_0<X_0^{-1}\binom{m}{r}$ which is equivalent to
$S_0<X_0^{-1}$. This proves part~(b).
\end{proof}

\section{Proof that \texorpdfstring{$f_m(r)$}{} is increasing
  for \texorpdfstring{$0\le r\le r_0$}{}}\label{S:inc}


Recall that $m\ge0$ and $r_0\coloneq\lfloor\frac{m}{3}\rfloor+1$.
We now strengthen Lemma~\ref{L:2}(c).




\begin{theorem}\label{T:lower}
  If $m\not\in\{0,1,3,6,9,12\}$, then $f_m(0)<f_m(1)<\cdots<f_m(r_0)$.
\end{theorem}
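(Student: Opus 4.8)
The plan is to split the chain of inequalities $f_m(0)<f_m(1)<\cdots<f_m(r_0)$ into two pieces: the initial segment $f_m(0)<\cdots<f_m(r_0-1)$, which is already given for free by Lemma~\ref{L:2}(c), and the single final step $f_m(r_0-1)<f_m(r_0)$, which is the only genuine content here. By Lemma~\ref{L:reduce}(a) (applied with $r=r_0-1$), the inequality $f_m(r_0-1)<f_m(r_0)$ is equivalent to $\sum_{i=0}^{r_0-1}\binom{m}{i}<\binom{m}{r_0}$, and by Lemma~\ref{L:strat}(b) (applied with $r=r_0-1$, so that $X_0=\frac{r_0}{m-r_0+1}$) this is in turn equivalent to $S_0<X_0^{-1}$, i.e.\ to $S_0<\frac{m-r_0+1}{r_0}$ where $S_0=T_{r_0-1}=\sum_{i=0}^{r_0-1}\prod_{k=1}^i X_k$ and now $X_i=\frac{r_0-i}{m-r_0+1+i}$.

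Next I would exploit the arithmetic of $r_0=\lfloor m/3\rfloor+1$: writing $k:=3r_0-m\in\{1,2,3\}$ (the three residue classes of $m$ mod $3$, with $m=3r_0-k$), the ratios $X_i=\frac{r_0-i}{2r_0+k-1+i}$ become explicit. The key observation is that $X_0=\frac{r_0}{2r_0+k-1}$ is close to $\tfrac12$, and that $X_{i}X_{i+1}\cdots$ decays roughly geometrically with ratio slightly less than $\tfrac12$, so $S_0$ is a bit less than $2=\sum_{i\ge 0}2^{-i}\cdot\cdots$ — wait, more precisely $S_0<\sum_{i=0}^{\infty}X_0 X_1\cdots X_{i-1}$ and each successive $X$ is smaller than the previous since $X_i/X_{i-1}=\frac{(r_0-i)(2r_0+k-2+i)}{(r_0-i+1)(2r_0+k-1+i)}<1$; in fact $X_i<X_0<\tfrac{1}{2}$ once $k\ge 1$ forces $2r_0+k-1>2r_0$. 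Hence $S_0<1+X_0+X_0^2+\cdots=\frac{1}{1-X_0}$, and the task reduces to checking $\frac{1}{1-X_0}\le X_0^{-1}$, i.e.\ $X_0\le 1-X_0$, i.e.\ $X_0\le\tfrac12$, i.e.\ $r_0\le 2r_0+k-1$, i.e.\ $r_0\ge 1-k$ — which holds trivially. That crude bound would actually prove $S_0\le X_0^{-1}$ but I need the \emph{strict} inequality $S_0<X_0^{-1}$, and the geometric majorant is not tight enough at the boundary when $X_0$ is exactly near $\tfrac12$; so I would instead keep one or two terms of $S_0$ exactly and bound only the tail geometrically, obtaining something like $S_0 < 1+X_0+\frac{X_0 X_1}{1-X_1}$ and then verify $1+X_0+\frac{X_0X_1}{1-X_1}<X_0^{-1}$ as an inequality in $r_0$ and $k$, clearing denominators to get a polynomial inequality that is checked to hold for all $r_0$ above a small threshold, with the finitely many small $r_0$ (equivalently small $m$) handled by the direct computation already recorded in Table~\ref{T:r0} — this is exactly where the excluded values $m\in\{0,1,3,6,9,12\}$ enter.

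The main obstacle is the \emph{sharpness} of the inequality $S_0<X_0^{-1}$: since $X_0\to\tfrac12$ and $S_0\to 2$ and $X_0^{-1}\to 2$ as $m\to\infty$, both sides converge to the same limit, so no fixed-slack geometric bound works uniformly; the proof must extract the \emph{rate} at which $X_0^{-1}-S_0$ stays positive. Concretely, $X_0^{-1}-2 = \frac{k-1}{r_0}$ while $2-S_0$ is of order $\frac{1}{r_0}$ as well, so one must compare the two constants hidden in these $\Theta(1/r_0)$ terms, which is delicate precisely when $k=1$ (i.e.\ $m\equiv 2\pmod 3$) where $X_0^{-1}-2$ is itself only $\Theta(1/r_0^2)$. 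I expect this $k=1$ case to force the most careful estimate — retaining enough leading terms of $S_0$ and expanding $X_0,X_1,X_2$ to second order in $1/r_0$ — and to be the reason the paper remarks that the decreasing-side argument ``involves a delicate inductive proof requiring a growing amount of precision''; on the increasing side here it should still be resolvable by a single sufficiently careful finite truncation plus a polynomial-inequality endgame, with the base cases $m\le 12$ (minus the listed exceptions) dispatched numerically.
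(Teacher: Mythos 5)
Your outline matches the paper's: Lemma~\ref{L:2}(c) reduces everything to the single step $f_m(r_0-1)<f_m(r_0)$, Lemmas~\ref{L:reduce}(a) and~\ref{L:strat}(b) convert this to $S_0<X_0^{-1}$, and one then truncates $S_0$, bounds the tail geometrically, and clears denominators to a polynomial inequality, with small $m$ handled by computation (the paper additionally uses Lemma~\ref{L:reduce}(b) to reduce to the single residue $m=3r_0-3$, keeps $X_1,X_2,X_3$ exact, bounds $S_3<2$ via $X_4<\tfrac12$, and lands on $(3t+1)(t-6)\ge0$ for $t\ge6$). However, your quantitative execution has a genuine gap, rooted in a sign error: with $r=r_0-1$ and $m=3r_0-k$ one gets $X_i=\frac{r_0-i}{2r_0-k+1+i}$, not $\frac{r_0-i}{2r_0+k-1+i}$. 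Consequently $X_0=\frac{r_0}{2r_0-k+1}\ge\tfrac12$ (with equality only for $k=1$), so your claim that $X_0<\tfrac12$, and hence that the crude majorant already gives $S_0\le X_0^{-1}$, is false: here $\frac{1}{1-X_0}\ge X_0^{-1}$, i.e.\ the pure geometric bound points the wrong way. The same error inverts your diagnosis of the hard case: $X_0^{-1}=2-\frac{k-1}{r_0}$ lies \emph{below} $2$ for $k\in\{2,3\}$, and the delicate case is $k=3$, i.e.\ $m\equiv0\pmod 3$ (exactly the class containing the exceptions $\{0,3,6,9,12\}$), not $m\equiv2\pmod3$ as you assert.

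More importantly, the refined bound you actually propose, $S_0<1+X_0+\frac{X_0X_1}{1-X_1}$, cannot close the argument: since $S_0=1+X_1+X_1X_2+\cdots$, the first retained term should be $X_1$, and in the critical case $m=3t$ one computes $X_0=\frac{t+1}{2t}$, $\frac{X_0X_1}{1-X_1}=\tfrac12$, so your right-hand side equals $2+\frac{1}{2t}$, whereas the target is $X_0^{-1}=\frac{2t}{t+1}=2-\frac{2}{t+1}<2$; the proposed inequality is therefore false for every $t$. With the indices corrected, the two-term truncation $1+X_1+\frac{X_1X_2}{1-X_2}<X_0^{-1}$ does reduce (for $m=3t$) to $t^2-6t-3>0$, valid for $t\ge7$, so your plan is salvageable, but this decisive polynomial verification is precisely the step you leave unchecked (``checked to hold for all $r_0$ above a small threshold''), and as written your truncation fails it. Note also that Table~\ref{T:r0} only covers $m\le12$, so the remaining small cases (e.g.\ $m=15,18$ if your inequality only starts at $t\ge7$) need Table~\ref{T:data} or a direct computation, and the residues $k=1,2$ must be handled either by separate inequalities or, as in the paper, by Lemma~\ref{L:reduce}(b).
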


\begin{proof}
  The statement is easy to check for $m\in\{2,4,5\}$.
  The statement follows from Tables~\ref{T:data} and~\ref{T:r0}
  for $m\in\{7,8,10,11,13,14\}$. Suppose now that $m\ge15$.
  
  \begin{table}[!ht]
  \caption{$\square=\max\{f_m(r_0-1), f_m(r_0)\}$ for $0\le m\le12$,
    $r_0=\lfloor m/3\rfloor+1$.}\label{T:r0}
  \begin{tabular}{rcccccccccccccc}
    \toprule
    $m$&&0&1&2&3&4&5&6&7&8&9&10&11&12\\  [0.5mm]
    $f_m(r_0-1)$&&\fbox{$1$}&\fbox{$1$}&$1$&\fbox{$2$}&$\frac52$&$3$&\fbox{$\frac{11}2$}&$\frac{29}{4}$
      &$\frac{37}{4}$&\fbox{$\frac{65}{4}$}&$22$&$29$&\fbox{$\frac{397}{8}$}\\ [2mm]
    $f_m(r_0)$&&$\frac{1}{2}$\tablefootnote{Observe that $f_0(1)=2^{-1}\left(\binom{0}{0}{+}\binom{0}{1}\right)=2^{-1}(1{+}0)=\frac{1}{2}$.}&\fbox{$1$}&\fbox{$\frac{3}{2}$}&$\frac{7}{4}$&\fbox{$\frac{11}{4}$}
      &\fbox{$4$}&$\frac{21}{4}$&\fbox{$8$}&\fbox{$\frac{93}{8}$}&$16$&\fbox{$\frac{193}{8}$}
    &\fbox{$\frac{281}{8}$}&$\frac{793}{16}$\\[2mm]
    \bottomrule
  \end{tabular}
  \end{table}


  Recall that $r_0=\lfloor\frac{m}{3}\rfloor+1$ and
  $m\in\{3r_0-3,3r_0-2,3r_0-1\}$. By Lemma~\ref{L:2}(c) it suffices
  to show that $f_m(r_0-1)<f_m(r_0)$.
  If we prove this for $m = 3r_0 - 3$,  Lemma~\ref{L:reduce}(b,a)
  gives it for $m=3r_0-2$ and $m=3r_0-1$ as well, so for $r_0 \geq 6$
  we want to show $f_{3r_0-3}(r_0-1) < f_{3r_0-3}(r_0)$.
  This is true for $r_0=6$ by
  Table~\ref{T:data}.  We set $t\coloneq r_0-1$, $m\coloneq 3t$ and we prove, using induction
  on $t$, that $f_{3t}(t)<f_{3t}(t+1)$ holds for all~$t\ge6$.

  Note that $f_{3t}(t)<f_{3t}(t+1)$ is equivalent by Lemma~\ref{L:reduce}(a) 
  to $\sum_{i=0}^{t}\binom{3t}{i}<\binom{3t}{t+1}$, and this is equivalent to
  $S_0<X_0^{-1}$ by Lemma~\ref{L:strat}(b). Putting $m=3t$ and $r=t$
  in~\eqref{E:X}, gives  $X_i=\frac{t-i+1}{2t+i}$ and
  $S_0=1+X_1+X_1X_2+\cdots+X_1X_2\cdots X_{t}$ by \eqref{E:S}.
  
  It follows from $0<X_{t}<\cdots<X_5<X_4$ and
  $X_4=\frac{t-3}{2t+4}<\frac12$ that
  \[
    S_3=1+X_4+X_4X_5+\cdots+X_4X_5\cdots X_{t}
    <1+\frac12+\frac14+\cdots+\frac{1}{2^{t-3}}<\sum_{i=0}^\infty\frac{1}{2^i}=2.
  \]
  The recurrence relation $S_j=1+X_{j+1}S_{j+1}$ for $0\le j<t$ implies that
  \begin{align*}
    S_0&=1+X_1\left(1+X_2\left(1+X_3S_3\right)\right)
    <1+X_1\left(1+X_2\left(1+2X_3\right)\right)\\
    &= 1+\frac{t}{2t+1}\left(1+\frac{t-1}{2t+2}\left(1+\frac{2(t-2)}{2t+3}\right)\right).
  \end{align*}
  We aim to show that $S_0<X_0^{-1}$. It suffices to
  prove $1+X_1\left(1+X_2\left(1+2X_3\right)\right)\le X_0^{-1}$
  where $X_0=\frac{t+1}{2t}$.
  This amounts to proving that
  \[
    1+\frac{t}{2t+1}\left(1+\frac{t-1}{2t+2}\left(1+\frac{2(t-2)}{2t+3}
   \right)\right) \le \frac{2t}{t+1}.
  \]
  Rearranging, and using the denominator $(2t+1)(2t+2)(2t+3)$, gives
  \[
  0\le\frac{3t^2-17t-6}{(2t+1)(2t+2)(2t+3)}
  =\frac{(3t+1)(t-6)}{(2t+1)(2t+2)(2t+3)}
  \]
  This inequality is valid for all $t\ge6$.  This completes the proof.
\end{proof}

How might one prove a nice formula such as
$\lim_{s\to\infty}\sum_{i=0}^s\binom{3s}{i}/\binom{3s}{s}=2$?

\begin{remark}\label{R:3s+9}
  For $s>4$ set $m=3s$ and $r_0=s+1$. Then
  $f_m(r_0-1)<f_m(r_0)$ by Theorem~\ref{T:lower}. Hence
  $\sum_{i=0}^{s}\binom{3s}{i}<\binom{3s}{s+1}=\frac{2s}{s+1}\binom{3s}{s}$
  and so
  $\lim_{s\to\infty}\sum_{i=0}^s\binom{3s}{i}/\binom{3s}{s}\le 2$.
  We show $f_m(r_0)>f_m(r_0+1)$ in Section~\ref{S:dec}, and
  therefore $\lim_{s\to\infty}\sum_{i=0}^s\binom{3s}{i}/\binom{3s}{s}\ge 2$.
\end{remark}

\section{Proof that \texorpdfstring{$f_m(r)$}{} is decreasing
  for \texorpdfstring{$r_0\le r\le m$}{}}\label{S:dec}

Showing that $f_m(r)$ decreases strictly for $r_0\le r\le m$ is much harder.
Recall that $\binom{r}{i}=0$ if $i<0$, and
$\binom{r}{i}=\frac{1}{i!}\prod_{j=0}^{i-1}(r-j)$ if $i\ge0$.
In this section we prove: 

\begin{theorem}\label{T:upper}
  If $m\ge2$, then
  $f_m(\lfloor m/3\rfloor{+}1)>f_m(\lfloor m/3\rfloor{+}2)>{\cdots}>f_m(m)\,{=}\,1$.
\end{theorem}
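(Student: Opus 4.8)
The plan is to reduce the theorem, via Lemma~\ref{L:reduce}(c) and Lemma~\ref{L:strat}(b), to the purely combinatorial inequality $\sum_{i=0}^r\binom{m}{i}>\binom{m}{r+1}$ for all $r$ with $r_0\le r<m$, and then to the statement $S_0>X_0^{-1}$ in the notation of~\eqref{E:X}--\eqref{E:S}. By Lemma~\ref{L:2}(b), once we know $f_m(r^*+1)>\cdots>f_m(m)$ it suffices to show that the ``turning point'' $r^*$ satisfies $r^*\le r_0$; equivalently, it suffices to prove $f_m(r_0)>f_m(r_0+1)$, i.e.\ the single inequality $\sum_{i=0}^{r_0}\binom{m}{i}>\binom{m}{r_0+1}$, since for $r>r_0$ the strict decrease then follows from part~(b). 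So the whole theorem comes down to one inequality for each residue of $m$ modulo $3$.

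Next I would mimic the structure of the proof of Theorem~\ref{T:lower}, but now chasing a \emph{lower} bound for $S_0$ rather than an upper bound. Writing $m=3t+k$ with $k\in\{0,1,2\}$ and $r=r_0=t+1$ (so $r_0-1=\lfloor m/3\rfloor$), one has $X_i=\frac{t-i+2}{2t+k+i-1}$, and the ratios $X_i$ are again decreasing in $i$ with $X_1$ close to $\tfrac12$ from above. Using the recurrence $S_j=1+X_{j+1}S_{j+1}$ and the trivial bound $S_j>1$, one gets $S_0>1+X_1(1+X_2(1+X_3(1+X_4)))$, a finite rational expression in $t$. The target $X_0^{-1}=\frac{m-r_0}{r_0+1}\cdot\frac{1}{1}$... more precisely $X_0=\frac{r_0+1}{m-r_0}$, wait --- $X_0=\frac{r-0+1}{m-r+0}=\frac{r_0+1}{m-r_0}$, so $X_0^{-1}=\frac{m-r_0}{r_0+1}$, which is slightly below $2$. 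Thus we must show a degree-few truncation of the continued-fraction-like expansion of $S_0$ already exceeds $X_0^{-1}$; clearing the common denominator $(2t+k)(2t+k+1)(2t+k+2)(2t+k+3)$ reduces this to the positivity of an explicit cubic (or quartic) in $t$ with small integer coefficients, valid for all $t$ beyond a small threshold, with the finitely many remaining $m$ (those near the excluded set $\{0,3,6,9,12\}$ and the small cases) checked directly against Table~\ref{T:data} and Table~\ref{T:r0}.

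I would handle the three residues $k=0,1,2$ in parallel, or alternatively prove the case $m=3t+2$ (i.e.\ $r_0-1=t$, $m=3r_0-1$) directly and then deduce $m=3r_0-2$ and $m=3r_0-3$ from it by the monotone-in-$m$ transfer principle of Lemma~\ref{L:reduce}(d) --- note the asymmetry with the increasing case, where (b) was used to go \emph{up} in $m$ from $m=3r_0-3$; here we want (d) to go \emph{down} in $m$ from whichever residue gives the sharpest inequality. A brief check of which direction of transfer preserves ``$\ge$'' versus ``$>$'' settles which residue to treat as the base case. The main obstacle I anticipate is calibrating the truncation depth: too shallow a truncation of $S_0$ (stopping at $S_1$ or $S_2$) will fail for small $t$ because $X_0^{-1}$ is only marginally less than $2=\lim_{t\to\infty}S_0$, so one must expand far enough that the truncation error $X_1X_2\cdots X_j$ is smaller than the gap $2-X_0^{-1}=\Theta(1/t)$; this forces a truncation depth growing like... no, in fact a fixed depth of about $3$ or $4$ suffices since each factor is roughly $\tfrac12$, but verifying that the resulting cubic is genuinely nonnegative on $t\ge t_0$ and that $t_0$ is small enough that the finitely many exceptions are exactly $\{0,3,6,9,12\}$ (and $m=1$, $m=2$ boundary behaviour) requires care --- that bookkeeping, rather than any single hard estimate, is where the work lies.
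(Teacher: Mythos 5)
Your reduction is the same as the paper's: by Lemma~\ref{L:2}(b) and Lemma~\ref{L:reduce}(c,d) everything comes down to the single inequality $\sum_{i=0}^{r_0}\binom{3r_0-1}{i}>\binom{3r_0-1}{r_0+1}$, i.e.\ $S_0>X_0^{-1}$ with $m=3r_0-1$, $r=r_0$, and the paper does exactly what you suggest at the end: it proves the base case $m=3r_0-1$ and transfers down to $m=3r_0-2,\,3r_0-3$ with Lemma~\ref{L:reduce}(d), handling $r_0\in\{1,2,3\}$ by hand. So the structural part of your plan is sound.

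The gap is in the central estimate: a \emph{fixed} truncation depth cannot work, and your first instinct (that the depth must grow) was the correct one. With $m=3r-1$ one has $X_1=\tfrac12$ and $X_i<\tfrac12$ for $i\ge2$, so for any fixed $j$ the truncated lower bound satisfies $T_j=1+X_1+X_1X_2+\cdots+X_1\cdots X_j<1+\tfrac12+\cdots+2^{-j}=2-2^{-j}$, while the target is $X_0^{-1}=\frac{2r-1}{r+1}=2-\frac{3}{r+1}$. Hence as soon as $\frac{3}{r+1}\le 2^{-j}$ (for your depth $j=4$, already for all $r\ge47$) the truncation lies strictly below $X_0^{-1}$, so the ``explicit cubic or quartic in $t$'' you propose to show nonnegative is in fact eventually negative: the discarded tail $S_0-T_j=X_1\cdots X_{j+1}S_{j+1}\approx 2^{-j}$ must be compared with the gap $2-X_0^{-1}=\Theta(1/r)$, which forces the depth to grow at least like $\log_2 r$. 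This is precisely the asymmetry with Theorem~\ref{T:lower}, where a depth-3 \emph{upper} truncation suffices because discarding the tail only helps there. The paper copes with this by proving, by induction on the truncation depth $j$, that $T_j>X_0^{-1}$ holds for all $r$ in a window $j\le r\le\binom{j+2}{2}$ that grows with $j$ (Lemma~\ref{L2}), which in turn rests on the sign analysis of the polynomials $A_i(r)$ in Lemma~\ref{L3}; taking $j=r_0$ then covers every $r_0\ge4$. Without some device of this ``growing precision'' kind, your argument fails for all large $m$, not just for finitely many exceptional cases.
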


Our proof of Theorem~\ref{T:upper} depends on two technical lemmas, the first
of which proves that the non-leading coefficients of a certain
polynomial $A(r)$ are all negative.


First define $B_i(r)=\prod_{\ell=1}^i(r-\ell)$. 
Now $\prod_{\ell=1}^i(r-\ell)=r^i+\sum_{k=0}^{i-1} b_{i,k}r^k$
and the coefficients $b_{i,k}$ alternate in sign: for $0\le k\le i$, we have
$b_{i,k}>0$ if $i-k$ is even and $b_{i,k}<0$ if $i-k$ is odd.
Next define polynomials $A_i(r)$ via:
\begin{equation}\label{E:A}
  A_2(r)=r^2-15r-10\quad
  \textup{and $\quad A_i(r)=(2r+i)A_{i-1}(r)-B_i(r)\quad$ for $i\ge3$.}
\end{equation}
Clearly $\deg(A_i(r))=i$ and we may write $A_i(r)=r^i+\sum_{k=0}^{i-1} a_{i,k}r^k$.
We use $a_{i,i}=1$.

Comparing coefficients in this recurrence and $B_i(r)=(r-i)B_{i-1}(r)$,
shows that
  \begin{align}
    &a_{2,0}=-10, &a_{2,1}=-15,\qquad  &a_{i,k}= ia_{i-1,k}+2a_{i-1,k-1}-b_{i,k}\quad
    &\textup{for $i\ge3$,} \tag{Ra}\label{Ra}\\
    &b_{2,0}=2, &b_{2,1}=-3,\hskip3.5mm\quad  &b_{i,k}=-ib_{i-1,k}+b_{i-1,k-1} &\textup{for $i\ge3$}.\tag{Rb}\label{Rb}
  \end{align}

\begin{lemma}\label{L3}
  Let $a_{i,k}, b_{i,k}, A_i(r), B_i(r)$ be as above.
  \begin{enumerate}
  \item If $i\ge2$, then $b_{i,i-1} = -\binom{i+1}{2}$
     and $a_{i,i-1} = -\binom{i+4}{2}$.
  \item   If $i\ge2$ and $0\le k\le i-1$, then
    $a_{i,k}\le -2b_{i,k}<0$ if $i-k$ is even,
    and $\qquad a_{i,k}\le b_{i,k}<0$ if $i-k$ is odd.
  \item If $i\ge2$, then the coefficients $a_{i,k}$ are negative for $0\le k<i$.
  \end{enumerate}
\end{lemma}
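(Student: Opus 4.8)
The plan is to prove the three parts in the order (a), then (b), then (c): part (c) is an immediate corollary of (b), while part (a) supplies one boundary case needed inside the induction for (b). Throughout I will use the conventions $a_{i,i}=b_{i,i}=1$ for the leading coefficients and $a_{i,k}=b_{i,k}=0$ for $k<0$, so that the recurrences \eqref{Ra} and \eqref{Rb} are valid for all indices in play, and I will freely use the stated sign pattern: $b_{i,k}>0$ when $i-k$ is even and $b_{i,k}<0$ when $i-k$ is odd, for $0\le k\le i$.

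For part (a), the value $b_{i,i-1}=-\binom{i+1}{2}$ is just Vieta's formula for the coefficient of $r^{i-1}$ in $\prod_{\ell=1}^i(r-\ell)$, namely $-(1+2+\dots+i)$; equivalently it drops out of \eqref{Rb} using $b_{i-1,i-1}=1$. For $a_{i,i-1}$ I induct on $i$, with base case $a_{2,1}=-15=-\binom{6}{2}$. The recurrence \eqref{Ra} together with $a_{i-1,i-1}=1$ and the value just obtained for $b_{i,i-1}$ gives $a_{i,i-1}=i+2a_{i-1,i-2}-b_{i,i-1}=i-2\binom{i+3}{2}+\binom{i+1}{2}$, which simplifies to $-\tfrac12(i^2+7i+12)=-\binom{i+4}{2}$.

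The heart of the lemma is part (b), proved by induction on $i$. The base case $i=2$ is the pair of numeric checks $a_{2,0}=-10\le-4=-2b_{2,0}$ and $a_{2,1}=-15\le-3=b_{2,1}$. For the inductive step with $i\ge3$, fix $k$ with $0\le k\le i-1$, expand $a_{i,k}=ia_{i-1,k}+2a_{i-1,k-1}-b_{i,k}$ via \eqref{Ra}, bound the two predecessors $a_{i-1,k}$ and $a_{i-1,k-1}$ using the inductive hypothesis — crucially, $(i-1)-k$ and $(i-1)-(k-1)=i-k$ have opposite parity, so exactly one predecessor is bounded by a ``$-2b$'' term and the other by a ``$b$'' term — and finally substitute $b_{i,k}=-ib_{i-1,k}+b_{i-1,k-1}$ from \eqref{Rb}. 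When $i-k$ is even this collapses the target $a_{i,k}\le-2b_{i,k}$ to $b_{i-1,k-1}\ge0$, which holds since $(i-1)-(k-1)=i-k$ is even; when $i-k$ is odd it collapses $a_{i,k}\le b_{i,k}$ to the identity $2b_{i,k}-b_{i,k}=b_{i,k}$. Two boundary values of $k$ need care: at $k=0$ the terms $a_{i-1,-1}$ and $b_{i-1,-1}$ vanish by convention and the same substitutions still close the inequality; at $k=i-1$ (which forces $i-k=1$, the odd case) the predecessor $a_{i-1,i-1}=1$ is not negative so the generic bound fails, and instead one quotes part (a): $a_{i,i-1}=-\binom{i+4}{2}\le-\binom{i+1}{2}=b_{i,i-1}$. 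Part (c) then follows instantly: for $0\le k<i$, part (b) gives $a_{i,k}\le-2b_{i,k}<0$ if $i-k$ is even and $a_{i,k}\le b_{i,k}<0$ if $i-k$ is odd.

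The delicate point — and the reason one proves the stronger statement (b) rather than attacking (c) directly — is that the inductive bounds are calibrated precisely: the factor $-2$ in the even-parity bound is exactly the threshold that makes the odd-parity step close (a factor $-1$ would fail), and the boundary case $k=i-1$ genuinely uses the sharp identity of part (a) rather than any crude estimate. So the real work is organizing the case analysis — parity of $i-k$ together with the two extreme values $k=0$ and $k=i-1$ — and verifying that the substitutions from \eqref{Ra} and \eqref{Rb} telescope as claimed; the underlying algebra is then routine, and the particular constants in $A_2(r)=r^2-15r-10$ are just what is needed to make the base case consistent with that telescoping.
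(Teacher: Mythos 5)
Your proposal is correct and follows essentially the same route as the paper: part (a) by Vieta plus induction via \eqref{Ra}, part (b) by induction on $i$ with the parity-of-$(i-k)$ case split, substituting \eqref{Ra} and \eqref{Rb}, using the conventions $a_{i,-1}=b_{i,-1}=0$ at $k=0$ and invoking part (a) for the boundary case $k=i-1$, and part (c) as an immediate consequence of (b). The only difference is presentational (you describe the telescoping narratively where the paper displays the chained inequalities), and your verified identities match the paper's computations.
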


\begin{proof}
  (a)~Clearly $b_{i,i-1}=-\sum_{j=1}^i j=-\binom{i+1}{2}$. The formula for
  $a_{i,i-1}$ holds for $i=2$ and by induction using the recurrence~\eqref{Ra}.
  
  (b) We use induction on $i$. For the base case $i=2$, 
  either $i-k$ is even and $a_{2,0}=-10\le -2b_{2,0}=-4$, or $i-k$ is odd and
  $a_{2,1}=-15<b_{2,1}=-3$. Thus the claims are true for $i=2$.
  Suppose now that $i\ge3$, and the claims are valid for $i-1$.

  By part~(a), $a_{i-1,i}=-\binom{i+4}{2}<-\binom{i+2}{2}=b_{i,i-1}<0$ as claimed.
  It remains to consider $k$ in the range $0\le k<i-1$.
  It is useful to set $a_{i,-1}=b_{i,-1}=0$. Suppose first that $i-k$ is even.
  Using the recurrences~\eqref{Ra},~\eqref{Rb} and induction gives 
  \begin{align*}
    a_{i,k}&=i(a_{i-1,k}+b_{i-1,k})+(2a_{i-1,k-1}-b_{i-1,k-1})\\
    &\le i(b_{i-1,k}+b_{i-1,k})+(-4b_{i-1,k-1}-b_{i-1,k-1})\\
    &\le 2ib_{i-1,k}-2b_{i-1,k-1}=-2b_{i,k}<0.
  \end{align*}

  If $i-k$ is odd, then a similar argument gives
  \begin{align*}
    a_{i,k}&=i(a_{i-1,k}+b_{i-1,k})+(2a_{i-1,k-1}-b_{i-1,k-1})\\
         &\le i(-2b_{i-1,k}+b_{i-1,k})+(2b_{i-1,k-1}-b_{i-1,k-1})\\
    &\le -ib_{i-1,k}+b_{i-1,k-1}=b_{i,k}<0.
  \end{align*}

  (c)~This follows immediately from part~(b).
\end{proof}

\begin{lemma}\label{L2}
  Suppose that $j\ge4$. Then
  $\sum_{i=r-j}^r\binom{3r-1}{i}>\binom{3r-1}{r+1}$ holds for all
  $r$ in the range $j\le r\le\binom{j+2}{2}$.
\end{lemma}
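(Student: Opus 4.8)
\textbf{Proof proposal for Lemma~\ref{L2}.}

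The plan is to convert the target inequality $\sum_{i=r-j}^r\binom{3r-1}{i}>\binom{3r-1}{r+1}$ into a statement about the quantity $T_j$ of \eqref{E:T}, then bound $T_j$ from below by a telescoping/polynomial argument that uses Lemma~\ref{L3}. First I would set $m:=3r-1$ and keep the $r$ of the lemma as the $r$ of Section~\ref{S:red}; since $j\le r<m$ when $r\ge j\ge4$, Lemma~\ref{L:strat}(a) applies and shows the claimed inequality is equivalent to $T_j>X_0^{-1}$, where $X_i=\frac{r-i+1}{m-r+i}=\frac{r-i+1}{2r-1+i}$ and $X_0^{-1}=\frac{2r-1}{r+1}$. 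So the whole problem becomes: prove $T_j>\frac{2r-1}{r+1}$ for $j\le r\le\binom{j+2}{2}$.

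Next I would make the products $X_1X_2\cdots X_i$ explicit. We have $\prod_{k=1}^i X_k=\dfrac{\prod_{k=1}^i(r-k+1)}{\prod_{k=1}^i(2r-1+k)}=\dfrac{r(r-1)\cdots(r-i+1)}{(2r)(2r+1)\cdots(2r-1+i)}$. Clearing the common denominator $D_j:=\prod_{k=1}^j(2r-1+k)$ in $T_j=\sum_{i=0}^j\prod_{k=1}^iX_k$, the inequality $T_j>\frac{2r-1}{r+1}$ becomes a polynomial inequality in $r$: after multiplying through by $(r+1)D_j$ one gets $(r+1)\sum_{i=0}^j\big(\prod_{k=1}^i(r-k+1)\big)\big(\prod_{k=i+1}^j(2r-1+k)\big) > (2r-1)D_j$, i.e. a single polynomial $P_j(r)>0$ of degree $j+1$ in $r$. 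I would recognise $P_j(r)$ as being, up to sign and a positive scalar, essentially the polynomial $A_j$ built in \eqref{E:A}: the recurrence $A_i(r)=(2r+i)A_{i-1}(r)-B_i(r)$ with $B_i(r)=\prod_{\ell=1}^i(r-\ell)$ is exactly the recurrence one obtains by peeling off the top term of $T_j$ and using $S_j=1+X_{j+1}S_{j+1}$-type identities; the base case $A_2(r)=r^2-15r-10$ should match the $j=$ small case after the reduction. (If the matching is only up to an affine shift in the index, I would adjust the bookkeeping so that $-P_j(r)$ equals $A_{j}(r)$ or $A_{j-c}(r)$ for an explicit small constant $c$.) Once the identification $P_j(r)=-A_{N}(r)$ (for the appropriate $N$ depending on $j$) or a clean variant is in place, Lemma~\ref{L3}(c) tells us every non-leading coefficient $a_{N,k}$ is negative, so $A_N(r)=r^N+\sum_{k<N}a_{N,k}r^k$ and hence $P_j(r)=-A_N(r)$ is a polynomial whose \emph{leading} coefficient is $-1<0$ and all of whose other coefficients are positive; equivalently $P_j(r)>0 \iff r^N < \sum_{k<N}(-a_{N,k})r^k = \sum_{k<N}|a_{N,k}|r^k$.

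The final step is to turn ``all non-leading coefficients negative'' into the explicit range $r\le\binom{j+2}{2}$. I would use Lemma~\ref{L3}(a), which gives the \emph{second} coefficient $a_{N,N-1}=-\binom{N+4}{2}$ exactly; for the purpose of an upper bound on the largest root of $A_N$, the dominant competing term as $r$ grows is $r^N$ versus $|a_{N,N-1}|r^{N-1}=\binom{N+4}{2}r^{N-1}$, and a Cauchy-type root bound (or a direct induction on $j$ using the recurrence \eqref{E:A}, which is cleaner) shows $A_N(r)<0$ — hence $P_j(r)>0$ — for all $r$ up to roughly $\binom{N+4}{2}$, which for the correct index shift is exactly $\binom{j+2}{2}$. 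Concretely I would prove by induction on $j$ that $A_{*}(r)<0$ on $[\,j,\binom{j+2}{2}\,]$: at $r=\binom{j+2}{2}$ one checks the sign using $A_i(r)=(2r+i)A_{i-1}(r)-B_i(r)$ together with the inductive sign information and the fact that $B_i(r)>0$ there, and one checks the lower endpoint $r=j$ directly (where many factors in $B$ vanish or are small). The main obstacle I anticipate is purely bookkeeping: getting the index correspondence between the combinatorial $T_j$ (with its $j+1$ summands) and the algebraic $A_i$ (with its base at $i=2$) exactly right, including verifying that the ``$-2b$ vs.\ $b$'' slack in Lemma~\ref{L3}(b) is enough to keep $A$ negative all the way out to $r=\binom{j+2}{2}$ rather than some smaller bound; the rest is the routine polynomial algebra of clearing denominators, which I would not write out in full.
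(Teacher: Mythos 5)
Your proposal is essentially the paper's own argument: reduce via Lemma~\ref{L:strat}(a) to $T_j>X_0^{-1}$, convert this to a polynomial sign condition tied to the recursion \eqref{E:A}, and invoke Lemma~\ref{L3} (all non-leading coefficients of $A_i$ negative, with second coefficient exactly $-\binom{i+4}{2}$) to get the sign on the stated range -- the paper just keeps $X_j$ on the left of the rearranged inequality and compares it with $A_{j-2}(r)/B_{j-2}(r)$, whereas you clear all denominators at once. Your hedged bookkeeping does close: one application of the recurrence $A_{j-1}(r)=(2r+j-1)A_{j-2}(r)-B_{j-1}(r)$ shows your cleared polynomial is exactly $-(r+1)\,r\,A_{j-1}(r)$ (so $c=1$ in your index shift), and since $A_{j-1}(r)<r^{j-1}-\binom{j+3}{2}r^{j-2}\le 0$ for $0<r\le\binom{j+3}{2}$, your route in fact proves the inequality on the slightly larger range $j\le r\le\binom{j+3}{2}$, so your statement that the bound comes out as ``exactly $\binom{j+2}{2}$'' is a harmless underestimate rather than a gap.
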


\begin{proof}

  We apply Lemma~\ref{L:strat}(a) with $m=3r-1$.
  Hence $X_i=\frac{r-i+1}{2r+i-1}$ by~\eqref{E:X}.
  Since $\sum_{i=r-j}^r\binom{m}{i}=\sum_{i=0}^{j}\binom{m}{r-i}$ it suffices
  by Lemma~\ref{L:strat}(a) to prove that
  \[ 
     T_j=1+X_{1}+X_{1}X_{2}+\cdots+X_{1}X_{2}\cdots X_j>X_0^{-1}.
  \] 
  We prove that this inequality holds for all $r$
  in the range $j\le r\le\binom{j+2}{2}$. This inequality is equivalent to
  \begin{equation}\label{E:EE}
     X_j>X_{j-1}^{-1}(\cdots(X_2^{-1}(X_1^{-1}(X_0^{-1}-1)-1)-1)\cdots)-1.
  \end{equation}

  The right-side of~\eqref{E:EE} is a rational function in $r$, 
  which when $j=4$, equals
  \[
    \frac{P_{4}(r)}{Q_{4}(r)}=\frac{2r+2}{r-2}\left(\frac{2r+1}{r-1}\left(\frac{2r}{r}\left(\frac{2r-1}{r+1}-1\right)-1\right)-1\right)-1
  \]
  where the denominator is $Q_4(r)=(r+1)r(r-1)(r-2)$,
  and the numerator is $P_4(r)=(r+1)r(r^2-15r-10)$. Since
  $\gcd(P_4(r),Q_4(r))=(r+1)r$, the polynomials
  $A_2(r)\coloneq r^2-15r-10$ and $B_2(r)\coloneq (r-1)(r-2)$ are coprime.
  The putative inequality~\eqref{E:EE} when $j=4$ is therefore
  \[
  \frac{r-3}{2r+3}>\frac{P_{4}(r)}{Q_{4}(r)}=\frac{A_{2}(r)}{B_{2}(r)}
  =\frac{r^2-15r-10}{(r-1)(r-2)}.
  \]
  Observe that $A_2(r)<r^2-15r\le0$ for $4\le r\le15=\binom{6}{2}$.
  Thus for~$r$ in the range
  $4\le r\le \binom{6}{2}$, the left side of~\eqref{E:EE} is positive,
  and the right side is at most 0. Thus the inequality
  is valid for $4\le r\le \binom{6}{2}$ and the claim is true for $j=4$.

  Assume now that $j>4$, and that the claim is true for $j-1$.
  Therefore the inequality~\eqref{E:EE} can be written
  \[
  X_j=\frac{r-j+1}{2r+j-1}>\frac{P_{j}(r)}{Q_{j}(r)}\qquad\textup{where}\qquad
  \frac{P_{j}(r)}{Q_{j}(r)}=(X_{j-1})^{-1}\frac{P_{j-1}(r)}{Q_{j-1}(r)}-1.
  \]
  Since $(X_{j-1})^{-1}=\frac{2r+j-2}{r-j+2}$, this gives rise to the recurrences:
  \begin{align*}
     P_{j}(r)&=(2r+j-2)P_{j-1}(r)-(r-j+2)Q_{j-1}(r)&&\textup{for $j>4$,}\\
     Q_{j}(r)&=(r-j+2)Q_{j-1}(r)                  &&\textup{for $j>4$.}
  \end{align*}
  It is clear that $Q_j(r)=(r+1)r(r-1)\cdots(r-j+2)=(r+1)rB_{j-2}(r)$ holds
  and $B_{j-2}(r)$ has degree $j-2$.
  Furthermore, $(r+1)r$ divides $\gcd(P_j(r),Q_j(r))$, so the
  polynomials $A_{j-2}(r)$, which are defined by the similar
  recurrence~\eqref{E:A},  satisfy
  $P_j(r)=(r+1)r A_{j-2}(r)$ and also have degree $j-2$.

    By Lemma~\ref{L3}, $A_i(r)-r^i$ has negative coefficients and leading
    coefficient $-\binom{i+4}{2}$. So for $i\ge2$ and
    $r\le\binom{i+4}{2}$, we have $A_i(r)<r^i-\binom{i+4}{2}r^{i-1}\le 0$.
    Further, $B_i(r)=\prod_{\ell=1}^i(r-\ell)>0$ for
    $r\ge i+1$. Hence $A_i(r)/B_i(r)<0$ for $r$ satisfying
    $i+2\le r\le \binom{i+4}{2}$. Suppose that $j=i+2$, then
    $P_j(r)/Q_j(r)<0$ for $r$ in the interval
    $j\le r\le \binom{j+2}{2}$. Using the definitions of
    $P_j(r), Q_j(r)$, the inequality~\eqref{E:EE} is the same as
    \[
    X_j=\frac{r-j+1}{2r+j-1}>\frac{P_j(r)}{Q_j(r)}=\frac{A_{j-2}(r)}{B_{j-2}(r)}.
    \]
    Thus for $r$ satisfying $j\le r\le \binom{j+2}{2}$, the
    left side of~\eqref{E:EE} is positive, and the right side
    is negative. Thus the claim
    is valid for $j\le r\le \binom{j+2}{2}$.
\end{proof}


\begin{proof}[Proof of Theorem~\ref{T:upper}]
  It follows from $\sum_{i=0}^m\binom{m}{i}=2^m$ that $f_m(m)=1$.
  Since $r_0\coloneq\lfloor m/3\rfloor+1$, we have $m\in\{3r_0-3,3r_0-2,3r_0-1\}$.
  If we can prove that $f_m(r_0)>f_m(r_0+1)$ for $m=3r_0-1$, then
  $f_m(r_0)>f_m(r_0+1)$ holds for $m=3r_0-2$ and $3r_0-3$ by
  Lemma~\ref{L:reduce}(d).  With the notation in Lemma~\ref{L:2},
  we have $2>t_m(r_0)$ and hence
  $r^*\le r_0$. Therefore $f_m(r_0+1)>\cdots> f_m(m)$ holds by Lemma~\ref{L:2}(b).


  In summary, it remains to prove
  $\sum_{i=0}^{r_0}\binom{3r_0-1}{i}>\binom{3r_0-1}{r_0+1}$ for $r_0\ge1$.
  This is true for $r_0=1, 2, 3$ since $\frac{3}{2}> 1$, $4> \frac{13}{4}$
  and $\frac{93}{8}> \frac{163}{16}$.
  For each $r_0\ge4$ set $j=r_0$. Then $j\ge4$ and
  $\sum_{i=0}^{r_0}\binom{m}{i}>\binom{m}{r_0+1}$ follows by Lemma~\ref{L2}.
  This completes the proof.
\end{proof}

\begin{proof}[Proof of Theorem~\ref{T}]
  The result follows from Theorems~\ref{T:lower} and~\ref{T:upper}.
  There are two equal sized maxima if $m=1$, otherwise the maximum is unique.
\end{proof}

\section{Estimating \texorpdfstring{$f_m(r_0)$}{}}\label{S:Bound}

This section is devoted to proving asymptotically optimal bounds for $f_m(r_0)$.

\begin{proof}[Proof of Theorem~\ref{T:bounds}]
  We first prove the upper bound in~\eqref{E:A1}. This is true if $m=1$.
  For $m\not\in\{0,1,3,6,9,12\}$ and $r_0=\lfloor m/3\rfloor+1$ it follows
  from Theorem~\ref{T:lower} that $f_m(r_0-1)< f_m(r_0)$ and by
  Lemma~\ref{L:reduce}(a) that $\sum_{i=0}^{r_0-1}\binom{m}{i}<\binom{m}{r_0}$.
  Therefore $\sum_{i=0}^{r_0}\binom{m}{i}<2\binom{m}{r_0}$ and the
  upper bound follows. For the lower bound,
  $f_m(r_0)> f_m(r_0+1)$ holds by Theorem~\ref{T:upper},  and so
  $\sum_{i=0}^{r_0}\binom{m}{i}>\binom{m}{r_0+1}$ by Lemma~\ref{L:reduce}(c).
  Hence $2^{-r_0}\binom{m}{r_0+1}< f_m(r_0)$, and the lower
  bound of~\eqref{E:A1} follows from
  $\binom{m}{r_0+1}=\frac{2r_0-k}{r_0+1}\binom{m}{r_0}
  =(2-\frac{k+2}{r_0+1})\binom{m}{r_0}$.
  To prove~\eqref{E:A2}, we use  binomial approximations.

  Suppose that $0<p<1$ and $q\coloneq 1-p$. If $pn$ is an integer, then
  $qn=n-pn$ is an integer, and Stirling's approximation
  $n!=\sqrt{2\pi n}(\frac{n}{e})^n(1+\textup{O}(\frac{1}{n}))$ gives
  \begin{equation}\label{E:Stirling}
    \binom{n}{pn}=\frac{c^n}{\sqrt{2\pi pqn}}
    \left(1+\textup{O}\left(\frac{1}{n}\right)\right)
    \qquad\textup{where $c=\frac{1}{p^pq^q}$.}    
  \end{equation}
  Paraphrasing~\cite{Ash}*{Lemma~4.7.1} gives the following
  upper and lower bounds:
  \begin{equation}\label{E:Ash}
    \frac{c^n}{\sqrt{8pqn}}
    \le\binom{n}{pn}\le\frac{c^n}{\sqrt{2\pi pqn}}
    \qquad\qquad\textup{where $c=\frac{1}{p^pq^q}$.}
  \end{equation}
  
  Henceforth set $p=\frac{1}{3}$, so $q=\frac23$ and
  $c=\frac{3}{2^{2/3}}$. Therefore $c^3=\frac{27}{4}$ and
  \[
  \frac{c^{3r_0}}{2^{r_0}}=\frac{1}{2^{r_0}}\left(\frac{27}{4}\right)^{r_0}
  =\left(\frac{27}{8}\right)^{r_0}=\left(\frac{3}{2}\right)^{3r_0}
  \qquad\textup{and}\qquad
  \frac{1}{\sqrt{2pq}}=\frac{3}{2}.
  \]
  We write $m=3r_0-k$ where $k\in\{1,2,3\}$.
  
  We now prove the upper bound for $f_m(r_0)$ in~\eqref{E:A2}. It follows from
  \[
  \binom{m}{r_0}=\binom{3r_0-k}{r_0}=\frac{2r_0-k+1}{3r_0-k+1}\binom{3r_0-k+1}{r_0}
  \le\frac{2}{3}\binom{3r_0-k+1}{r_0}
  \]
  that $\binom{m}{r_0}\le(\frac{2}{3})^k\binom{3r_0}{r_0}$.
  Setting $n=3r_0$ and $p=\frac13$ in~\eqref{E:Ash} and using $m<n$ shows
  \begin{align*}
    \frac{2}{2^{r_0}}\binom{3r_0}{r_0}=\frac{2}{2^{r_0}}\binom{n}{pn}
    \le\frac{2}{2^{r_0}}\frac{c^{3r_0}}{\sqrt{2\pi pqn}}
    =\frac{2}{\sqrt{2\pi pqn}}\left(\frac{3}{2}\right)^{3r_0}
    <\frac{3}{\sqrt{\pi m}}\left(\frac{3}{2}\right)^{3r_0}.
  \end{align*}
  Using $\binom{m}{r_0}\le(\frac{3}{2})^{-k}\binom{3r_0}{r_0}$
  and $m=3r_0-k$ gives
  \[
  f_m(r_0)
  \le \frac{2}{2^{r_0}}\binom{m}{r_0}
  \le \frac{2}{2^{r_0}}\left(\frac{3}{2}\right)^{-k}\binom{3r_0}{r_0}
  < \frac{3}{\sqrt{\pi m}}\left(\frac{3}{2}\right)^{m}.
  \]
  
  We now consider approximate lower bounds for $f_m(r_0)$. Our argument
  involves constants depending on $k$ but not $r_0$ whose values
  are not relevant here. We have 
  \[
  \binom{m}{r_0+1}
  =2\left(1+\textup{O}\left(\frac{1}{r_0}\right)\right)\binom{m}{r_0}
  =4\left(1+\textup{O}\left(\frac{1}{r_0}\right)\right)\binom{m}{r_0-1}.
  \]
  Further, if $k=1,2$ and $r_0\ge1$ it follows that
  \begin{align*}
  \binom{m}{r_0-1}&=\binom{3r_0-k}{r_0-1}=\frac{3r_0-k}{2r_0-k+1}\binom{3r_0-k-1}{r_0-1}\\
  &=\left(\frac{3}{2}+\frac{k-3}{2(2r_0-k+1)}\right)\binom{3r_0-k-1}{r_0-1}
  >\frac32\binom{3r_0-k-1}{r_0-1}.
  \end{align*}
  Hence 
  $\binom{m}{r_0-1} \ge(\frac{3}{2})^{3-k}\binom{3r_0-3}{r_0-1}$
  holds for $k\in\{1,2,3\}$ and $r_0\ge1$.

  Setting $n=3r_0-3$ and $p=\frac13$ in~\eqref{E:Stirling} yields
  \begin{align*}
    \frac{1}{2^{r_0}}\binom{3r_0-3}{r_0-1}
    &=\frac{1}{2^{r_0}}\binom{n}{pn}
    =\frac{1}{2^{r_0}}\frac{c^{3r_0-3}}{\sqrt{2pq\pi n}}\left(1+\textup{O}\left(\frac{1}{n}\right)\right).
  \end{align*}
  However, $\frac{c^{3r_0}}{2^{r_0}}=(\frac{3}{2})^{3r_0}$ and
  $\frac{c^{-3}}{\sqrt{2pq\pi n}}=\frac{3c^{-3}}{2\sqrt{\pi n}}
  =\frac{2}{9\sqrt{\pi n}}=\frac{2}{9\sqrt{\pi m}}(1+\textup{O}\left(\frac{1}{m}\right))$. Therefore
  \[
    \frac{1}{2^{r_0}}\binom{3r_0-3}{r_0-1}
    =\frac{2}{9\sqrt{\pi m}}\left(\frac{3}{2}\right)^{3r_0}
    \left(1+\textup{O}\left(\frac{1}{m}\right)\right).
  \]
  The above bounds give
  \begin{align*}
  f_m(r_0)&\ge\frac{1}{2^{r_0}}\binom{m}{r_0+1}
  \ge4\left(1+\textup{O}\left(\frac{1}{r_0}\right)\right)\left(\frac{3}{2}\right)^{3-k}\frac{1}{2^{r_0}}\binom{3r_0-3}{r_0-1}\\
  &=4\left(1{+}\textup{O}\left(\frac{1}{m}\right)\right)\left(\frac{3}{2}\right)^{3-k}\frac{2}{9\sqrt{\pi m}}\left(\frac{3}{2}\right)^{3r_0}
  =\left(1{+}\textup{O}\left(\frac{1}{m}\right)\right)\frac{3}{\sqrt{\pi m}}\left(\frac{3}{2}\right)^{m}.
  \end{align*}
  Finally, since $1+\textup{O}\left(\frac{1}{m}\right)\to1$
  as $m\to\infty$, the limit in~\eqref{E:A2} follows.
\end{proof}

\end{document}